\long\global\def\C#1\F{{}}
\begin{document}
\pagestyle{plain}

\title{\vspace*{-.5cm}\Huge Mediatic Graphs\thanks{We are grateful to David Eppstein for many useful exchanges pertaining to the results presented here.}}
\author{Jean-Claude Falmagne\thanks{Corresponding author: Dept. of Cognitive Sciences, University of California, Irvine, CA92697.} \hspace{4cm} Sergei Ovchinnikov\\
University of California, Irvine\hspace{2.3cm} San Francisco State University\\
\normalsize jcf@uci.edu\hspace{6cm}\normalsize sergei@sfsu.edu
}

\date{}
\maketitle

\thispagestyle{empty}

\vspace*{-.5cm}
\begin{abstract}
\fp
A medium is a type of semigroup on a set of states, constrained by strong axioms. Any medium can be represented as an isometric subgraph of the hypercube, with each token of the medium represented by a particular equivalence class of arcs of the subgraph. Such a representation, although useful, is not especially revealing of the structure of a particular medium. We propose an axiomatic definition of the concept of  a `mediatic graph'. We prove that the graph of any medium is a mediatic graph. We also show that, for any non-necessarily finite set $\SSS$, 
there exists a bijection from the collection $\mfM$ of all the media on $\SSS$ of states onto the collection $\mfG$ of all the mediatic graphs on 
$\SSS$. A change of framework for media is noteworthy: the concept of a medium is specified here in terms of two axioms, rather than the original four. 
\end{abstract}

\newpage
\section*{Background and Introduction}
The core concept of this paper can occur in the guise of various representations. Four of them are relevant here, the last one being new. 
\begin{roster}
\item A {\sc medium}, that is, a semigroup of transformations on a set of states, constrained by strong axioms \citep[see][]{falma97a, falma02}.
\item An {\sc isometric subgraph of the hypercube, or ``partial cube.''} By ``isometric'', we mean that the distance between any two vertices of the subgraph is identical to the distance between the same two vertices in the hypercube \citep[][]{graha71,djoko73}. Each  state of the medium is mapped to a vertex of the graph, and each transformation corresponds to an equivalence class of its arcs. Note that, as will become clear later on, no assumption of finiteness is made in this or in any of the other representation. 
\item An {\sc isometric subgraph of the integer lattice}.  This  representation is not exactly interchangeable with the preceding one. While it is true that any isometric subgraph of the hypercube is representable as an isometric subgraph of the integer lattice and vice versa, the latter representation lands in a space equipped with a considerable amount of structure. Notions of `lines', `hyperplanes', or `parallelism' can be legitimately defined if one wishes. Moreover, the dimension of the lattice representation is typically much smaller than that of the partial cube representing the same medium and so can be valuable in the representation of large media \citep[see, in particular,][in which an algorithm is described for finding the minimum dimension of a lattice representation of a partial cube]{eppst05}. 
\item A {\sc mediatic graph}.  Axiomatic definitions are usually regarded as preferable whenever feasible, and that is what is given here. 
\end{roster}

The definition of a medium is recalled in the next section, together with some key concepts and the consequences of the axioms that are useful for this paper. Note that two axioms are used, which are equivalent to the original four used by \citet{falma97a} \citep[see also][]{falma02, eppst02}. The graph of a medium and those graphs that induce media, called `mediatic graphs' are defined  and studied in the following two sections. The last two sections of the paper are devoted to specifying the correspondence between mediatic graphs and media, for a given possibly infinity set---of vertices or states depending on the case.
\vtl 

The subject of this paper may at first seem to be singularly ill chosen for a volume honoring Peter Fishburn's, as its topic does not readily evoke any of Peter's favorite concepts. But the enormously rich span of his accomplishment is not so easily escaped: indeed, the set of all interval orders 
\citep[][]{fishb71}
on any finite set is representable as a mediatic graph, and so is the set of all semiorders 
\citep[][]{fishb85, fishb99}
on the same set, these three citations heading  a list far too long to be included here\footnote{In view of the constraints set by the editors of this volume on the length of the many contributing papers.}. For the representability of families of interval orders or semiorders by mediatic graphs, see the concluding paragraph of this paper

\section*{The Concept of a Medium}
We begin with the terminology of `token systems' which provides a convenient framework.  
\begin{definition}\label{token system}  Let $\SSS$ be a set of {\sl states}. A {\sl token} is a function $\tau:S\mapsto S\tau$ mapping $\SSS$ into
itself. We shall use the abbreviations $S\tau=\tau(S),$ and
$
S\tau_1\tau_2\cdots\tau_n=\tau_n[\cdots\tau_2[\tau_1(S)]\cdots ]
$
for the function
composition. By definition, the identity function $\tau_0$  on $\SSS$
 is not a token. Let $\TTT $ be a set of tokens on~$\SSS$. 
The pair $(\SSS,\TTT )$ is called a {\sl token system}. We suppose that  $|\SSS|\geq 2$ and $\TTT\neq \es$.

Let $V$ and $S$ be two distinct states. Then $V$ is {\sl adjacent} to $S$ if $S\tau = V$ for some token $\tau$.\linebreak A token $\tilde\tau$ is a {\sl
reverse} of a token $\tau$ if, for any two adjacent states
$S$ and $V$, we have 
\begin{equation}\label{def reverse}
S\tau=V \quad\EQ\quad  V\tilde\tau=S,
\end{equation}
and thus $S\tau\tilde\tau = S$. 
It is straightforward that a token has at most one reverse.  If the reverse $\tilde\tau$ of a token $\tau$ exists, then $\tilde{\tilde\tau} = \tau$; that is, $\tau$ and $\tilde\tau$ are mutual reverses. 
If every
token has a reverse,  then  adjacency is a symmetric relation on
$\SSS$.
\end{definition}
\begin{definition}
A {\em message} is a string of elements of the set of tokens $\TTT$. The message $\tau_1\ldots\tau_n$ defines a function
$
S\mapsto S\tau_1\cdots\tau_n
$
on the set of states $\SSS$. If $\bm=\tau_1\ldots\tau_n$ denotes a message, we also (by abuse of notation) write $\bm=\tau_1\cdots\tau_n$ for the corresponding function.
No ambiguity will arise from this double usage.
\vtl
A message may consist in (the symbol representing) a single token. The {\sl content} of a message
${\bm}=\tau_1\ldots\tau_n$ is the set 
$\CCC({\bm})=\{\tau_1,\ldots,\tau_n\}$
of its tokens. We write 
$\ell({\bm})=n$ to denote the {\sl length} of the message $\bm$. (We have thus
$|\CCC(\bm)|\leq \ell(\bm)$.) A~message $\bm$ is {\sl effective} (resp.~{\sl
ineffective})  for a state $S$ if
$S{\bm}\neq S$ (resp.
$S{\bm}{\,=\, }S$) for the function\linebreak $S\mapsto S\bm$. A~message ${\bm}=\tau_1\ldots\tau_n$ is {\sl stepwise
effective} for
$S$ if  $S\tau_1\cdots\tau_k\neq S\tau_0\cdots\tau_{k-1}$, $1\leq k\leq n. $ A message which is both stepwise effective and ineffective for some state is called a {\sl return message} or, more brief, a {\sl return} (for that state).
\vtl
A message is {\sl consistent} if it does not contain both a token and its
reverse, and {\sl inconsistent} otherwise.  Two messages $\bm$ and $\bn$ are 
{\sl jointly consistent} if 
$\bm\bn$ (or, equivalently, $\bn\bm$) is consistent. A~consistent message which is 
stepwise effective for some state $S$ and does not have any of its token occurring more than once is said to be {\sl concise} (for~$S$).
A
message ${\bm}=\tau_1\ldots\tau_n$ is {\sl vacuous} 
 if the set of indices
$\{1,\ldots,n\}$ can be partitioned into pairs $\{i,j\},$ such that 
$\tau_i$ and $\tau_j$ are mutual reverses.
 By abuse of language, we sometimes call `empty' a place holder symbol that can be deleted, as in:
  `let $\bm\bn$ be a message in which $\bn$ is either a concise message or is empty' (that is $\bm\bn=\bm$).  If $\bm=\tau_1\ldots\tau_n$ is
a stepwise effective message  producing a state $V$ from a state $S$, then the {\sl reverse} of $\bm$ is defined by $\widetilde {\bm} =\tilde\tau_n\ldots\tilde\tau_1$.
We then have clearly $V\widetilde \bm = S$ and moreover $\tau \in \CCC(\bm)$ if and only if $\tilde\tau\in\CCC(\widetilde {\bm})$.  
\end{definition}

\begin{mediaaxioms} {\rm A token system $(\SSS,\TTT)$ is called a {\sl medium (on $\SSS$)} if the two following axioms  are satisfied.
\begin{roster}
\item[]
\begin{roster}
 \item[{[Ma]}] For any two distinct states $S,V$ in $\SSS$, there is a
concise message producing $V$ from $S$.
\item[{[Mb]}] Any return message  is vacuous.
\end{roster}
\end{roster}

A medium $(\SSS,\TTT)$ is {\sl finite} if $\SSS$ is a finite set. The concept of a medium was proposed by \citet{falma97a} who proved various basic facts about media. Other results were obtained by \citet{falma02} (see also Ovchinnikov and Dukhovny, 2000; Eppstein and Falmagne, 2002; Ovchinnikov, 2006). 
\tl

Four different axioms\footnote{\citet{falma97a} used  a slightly different definition of `reverse', allowing the possibility of several reverses for a given token. This was compensated by a stronger version of [M1] requiring the existence of a unique reverse for every token.}
 were used  in the papers cited above to define the concept of a medium, which are equivalent to 
Axioms [Ma] and [Mb]. Specifically, we have the following result:
}
\end{mediaaxioms}

\begin{theorem}\label{equiv axioms} A token system $(\SSS,\TTT)$ is a medium if and only if the following four conditions hold:
\begin{roster}
\item[]
\begin{roster}
 \item[{\rm [M1]}] Any token has a reverse.
\item[{\rm [M2]}] For any two distinct states $S,V$ in $\SSS$ there is a
consistent message transforming $S$ into $V$.
\item[{\rm [M3]}] A message which is stepwise effective for some state
is ineffective for that state if and only if it is vacuous.
\item[{\rm [M4]}] Two stepwise effective, consistent messages producing the same state are jointly consistent.
\end{roster}
\end{roster}
\end{theorem}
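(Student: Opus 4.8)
The statement is a biconditional, so the plan is to establish $\text{[Ma]}\wedge\text{[Mb]}\Longrightarrow\text{[M1]}\wedge\text{[M2]}\wedge\text{[M3]}\wedge\text{[M4]}$ and its converse separately, disposing of the two cheap pieces first. For the forward implication [M2] is immediate, since a concise message is consistent, so [Ma] already supplies a consistent message transforming $S$ into $V$; for the converse, [Mb] is immediate, since a return message is by definition stepwise effective and ineffective, hence vacuous by [M3]. Everything else I would route through a single bookkeeping device. Call the multiset of tokens occurring in a message $\bm$ (with multiplicities) its \emph{content multiset}, and call such a multiset \emph{balanced} if each token occurs exactly as often as its reverse (a self-reverse token an even number of times). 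An elementary check gives that a message is vacuous exactly when its content multiset is balanced, and when a return is built by concatenating several messages, [Mb] (equivalently the first half of [M3]) says its content multiset, being the disjoint union of the pieces' content multisets, is balanced. Applying that identity to returns built by concatenation is how I would extract [M1], the second half of [M3], and [M4]. I would also record as a preliminary that no token is its own reverse, so that the clause ``contains both a token and its reverse'' is never triggered degenerately.

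For the forward implication: \textbf{[M1]} — since $\tau$ is not the identity, pick $S$ with $S\tau=V\neq S$; by [Ma] take a concise $\bn$ producing $S$ from $V$; then $\tau\bn$ is stepwise effective for $S$ and ineffective, so a return, so vacuous by [Mb], and in a vacuous message the first token $\tau$ is matched with its reverse, so $\tilde\tau$ exists. \textbf{[M3]}, second half — suppose a stepwise effective vacuous message $\bm$ had $S\bm=V\neq S$; take a concise $\bn$ producing $S$ from $V$; then $\bm\bn$ is a return, so its content multiset is balanced; that of the vacuous $\bm$ is balanced; subtracting, that of $\bn$ is balanced — impossible for a nonempty concise (hence repetition-free, consistent) message. \textbf{[M4]} — let $\bm$, $\bn$ be stepwise effective consistent messages producing a common state $W$, from states $S$, $T$, and suppose they were not jointly consistent, say $\tau\in\CCC(\bm)$ and $\tilde\tau\in\CCC(\bn)$; then $\widetilde\bn$ is stepwise effective, produces $T$ from $W$, and contains $\tau$, so $\bm\widetilde\bn$ produces $T$ from $S$; complete it to a return (appending a concise message from $T$ to $S$ by [Ma] when $S\neq T$), so by [Mb] its content multiset is balanced; but there $\tau$ occurs at least twice — once inside $\bm$, once inside $\widetilde\bn$ — while $\tilde\tau$ occurs at most once, only possibly in the appended concise message, contradicting balance.

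For the converse implication only [Ma] remains. Starting from a consistent message $\bm$ producing $V$ from $S$ (given by [M2]), I would delete ineffective steps one at a time; each deletion keeps the message consistent and leaves the state it produces from $S$ unchanged, and the process terminates in a stepwise effective consistent message $\bm'$, nonempty since $V\neq S$. It remains to see that no token repeats in $\bm'$. If $\tau$ occurred at positions $i<j$ with $j-i$ least, write $R_0,R_1,\dots$ for the states visited by $\bm'$ from $S$; then the subword $\tau_i\cdots\tau_{j-1}$ of $\bm'$ is a stepwise effective consistent message producing $R_{j-1}$ from $R_{i-1}$ and it contains $\tau$, whereas — since $R_{j-1}\tau=R_j$ with $R_{j-1}\neq R_j$ — [M1] shows the one-token message $\tilde\tau$ produces $R_{j-1}$ from $R_j$. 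These two messages produce the same state $R_{j-1}$, so [M4] forces them to be jointly consistent, contradicting that one contains $\tau$ and the other $\tilde\tau$. Hence $\bm'$ is concise, which is [Ma].

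The routine parts — the equivalence ``vacuous $\Leftrightarrow$ balanced content multiset'', the termination and $S$-value preservation of the trimming step, and the fact that reverses, conciseness and consistency behave as expected under $\bm\mapsto\widetilde\bm$ — I would dispatch in a line each. The one point needing care, and the main obstacle, is non-circularity in the forward direction: the content-multiset trick repeatedly calls on a concise message (from [Ma]) and the balancedness of returns (from [Mb]), and $\widetilde\bn$ only makes sense once every token is known to have a reverse, so the conclusions must be taken in the order [M2], [M1], then [M3] and [M4]; one must also keep the degenerate self-reverse token excluded so that ``consistent'' and ``jointly consistent'' mean what these arguments assume.
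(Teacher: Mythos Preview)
The paper does not actually prove this theorem: immediately after the statement it reads ``We omit the simple proof of the equivalence between [Ma]-[Mb] and [M1]-[M4].'' So there is no argument in the paper to compare yours against.

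That said, your proposal is a correct proof. The balanced--content--multiset device is a clean way to extract [M3] and [M4] from [Mb], and the trimming-plus-[M4] argument for [Ma] is the standard route. Two small remarks. First, the minimality clause ``with $j-i$ least'' in your [Ma] argument is never used: the contradiction from [M4] goes through for \emph{any} pair of positions $i<j$ with $\tau_i=\tau_j$. Second, the ``no token is its own reverse'' preliminary does not need to be established separately: at every point where you invoke it, the token $\tau$ in question lies in a message already assumed consistent, and a consistent message cannot contain a self-reverse token (it would then contain both $\tau$ and $\tilde\tau=\tau$). So the potential circularity you flag in the last paragraph dissolves on its own. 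With those cosmetic simplifications your argument would be ready to insert verbatim where the paper leaves its gap.
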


 We omit the simple proof of the equivalence between [Ma]-[Mb] and [M1]-[M4].

\section*{Some Basic Results}

 The material in this section, only part of which is new, is instrumental for the graph-theoretical results presented  in this paper. We omit the proofs of previously published results \citep[see][]{falma97a, falma02}. 

\begin{lemma}\label{prep lem} {\sl 
{\rm (i)} No token can be identical to its own reverse.
\vtl

{\rm (ii)} Let $\bm$ be a message that is concise for some state; we have then $l(m) = |C(m)|$ and\linebreak  $\CCC(\bm)\cap\CCC(\widetilde\bm) = \es$.
\vtl

{\rm (iii)}  For any two adjacent (thus, distinct) states $S$ and $V$, there is
exactly one token producing $V$ from $S$.
\vtl

{\rm  (iv)} No token
can be a 1-1 function.
\vtl

{\rm (v)} Suppose that $\bm$ and $\bn$ are stepwise
effective for $S$ and $V$, respectively, with $S\bm = V$ and $V\bn = W$. Then
$\bm\bn$ is stepwise effective for $S$, with $S\bm\bn = W$. 
\vtl

{\rm (vi)} Let $\bm$ and $\bn$ be two distinct concise messages transforming some state $S$. Then 
$$
S\bm =S\bn\quad\EQ\quad \CCC(\bm) = \CCC(\bn).
$$
}
\end{lemma}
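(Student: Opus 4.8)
The plan is to treat both directions by splicing $\bm$ and $\bn$ into a return message for $S$ and then invoking Axiom~[Mb]. First I would attach to every message $\bm'$ and every token $\tau$ the integer $n_\tau(\bm')$, the number of occurrences of $\tau$ in $\bm'$ minus the number of occurrences of $\tilde\tau$ in $\bm'$, and record three facts. (a) $n_\tau$ is additive under concatenation and satisfies $n_\tau(\widetilde{\bm'})=-\,n_\tau(\bm')$. (b) If $\bm'$ is vacuous, then $n_\tau(\bm')=0$ for every $\tau$: its positions split into pairs of mutual reverses, no token is its own reverse by Lemma~\ref{prep lem}(i), so every pair consists of a $\tau$ facing a $\tilde\tau$ (or of neither) and hence contributes nothing net to $n_\tau$. (c) If $\bm'$ is concise, then $n_\tau(\bm')=1$ when $\tau\in\CCC(\bm')$, $n_\tau(\bm')=-1$ when $\tilde\tau\in\CCC(\bm')$, and $n_\tau(\bm')=0$ otherwise, since by Lemma~\ref{prep lem}(ii) a concise message repeats no token and by consistency never contains a token together with its reverse. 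In particular, by (c), for a concise message knowing all the values $n_\tau$ is the same as knowing its content.

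For the implication $S\bm=S\bn\Rightarrow\CCC(\bm)=\CCC(\bn)$: since $\widetilde{\bn}$ is stepwise effective for $S\bn$, Lemma~\ref{prep lem}(v) shows that $\bm\widetilde{\bn}$ is stepwise effective for $S$ and returns $S$ to $S$, so it is a return for $S$, hence vacuous by~[Mb]. By (a) and (b), $0=n_\tau(\bm\widetilde{\bn})=n_\tau(\bm)-n_\tau(\bn)$ for every $\tau$, and since $\bm$ and $\bn$ are concise, (c) turns the equalities $n_\tau(\bm)=n_\tau(\bn)$ into $\CCC(\bm)=\CCC(\bn)$.

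For the converse I would argue by contradiction: suppose $\CCC(\bm)=\CCC(\bn)$ but $S\bm\neq S\bn$. From $\CCC(\bm)=\CCC(\bn)$ and (c), $n_\tau(\bm)=n_\tau(\bn)$ for every $\tau$. By~[Ma] there is a concise message $\bp$ carrying $S\bm$ to $S\bn$, and $\bp$ is nonempty because it is effective for $S\bm$. By Lemma~\ref{prep lem}(v), $\bm\bp\widetilde{\bn}$ is stepwise effective for $S$ and sends $S$ to $S$, so it is a return, hence vacuous by~[Mb]; thus $0=n_\tau(\bm\bp\widetilde{\bn})=n_\tau(\bm)+n_\tau(\bp)-n_\tau(\bn)=n_\tau(\bp)$ for every $\tau$. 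But $\bp$ is consistent, so it contains no token together with its reverse; a nonempty such message contains some token $\tau$ with no occurrence of $\tilde\tau$, whence $n_\tau(\bp)\geq 1$ --- a contradiction. Hence $S\bm=S\bn$. The only genuinely delicate point is this converse: because $\bm$ and $\bn$ may land on different states they cannot be spliced into a return directly, so one must insert an auxiliary concise bridge $\bp$ between $S\bm$ and $S\bn$, and the contradiction then comes from the tension between the vacuity of $\bm\bp\widetilde{\bn}$, which kills every $n_\tau(\bp)$, and the consistency of the nonempty $\bp$, which makes some $n_\tau(\bp)$ strictly positive. Everything else is routine bookkeeping with the additive invariant $n_\tau$.
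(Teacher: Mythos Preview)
Your proposal addresses only part~(vi) of the lemma, taking parts~(i), (ii), and~(v) as already established. That is a reasonable reading, since the paper itself omits the proof of this lemma entirely, citing it as a previously published result (\cite{falma97a,falma02}); there is therefore no in-paper argument to compare against.

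Your proof of~(vi) is correct. The signed-count invariant $n_\tau$ is a clean device: additivity under concatenation, vanishing on vacuous messages, and the trichotomy $\{1,-1,0\}$ on concise messages are exactly the bookkeeping facts needed, and you derive them carefully from~(i) and~(ii). The forward direction is immediate once $\bm\widetilde{\bn}$ is recognised as a return. For the converse, your insertion of a concise bridge $\bp$ from $S\bm$ to $S\bn$ via~[Ma], followed by the observation that vacuity of $\bm\bp\widetilde{\bn}$ forces $n_\tau(\bp)=0$ while conciseness of a nonempty $\bp$ forces some $n_\tau(\bp)=1$, is sound. One small tightening: in the last step you appeal only to consistency of $\bp$ to get $n_\tau(\bp)\geq 1$, but since $\bp$ is in fact concise you may invoke~(c) directly to get $n_\tau(\bp)=1$ for any $\tau\in\CCC(\bp)$; this avoids the slightly weaker inequality argument.

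If you intend this as a proof of the full lemma, you still owe arguments for~(i)--(v); otherwise, as a proof of~(vi) alone it stands.
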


Lemma \ref{prep lem}(vi) suggests an important concept.
\begin{definition}\label{def content} {\rm Let $(\SSS,\TTT)$ be a medium. For any state $S$, define
the {\sl (token) content} of $S$ as the set $\widehat  S$ of all tokens each of which
is contained in at least one concise message producing $S$; formally:
$$
\widehat S = \{\tau \in \TTT\st\exists V\in\SSS, V\bm =S, \text{ for } \bm \text{ concise with }
 \tau\in\CCC(\bm)\}.
$$ 
We refer to the family $\widehat \SSS$  of all the contents of the states in $\SSS$  as the {\sl content family} of the medium $(\SSS,\TTT)$.}
\end{definition}
\begin{remark} {\rm
Because any two stepwise effective, consistent messages producing the same
state must be jointly consistent (Condition [M4] in Theorem \ref{equiv axioms}), the content of a state cannot contain
both a token and its reverse.}
\end{remark}

Writing $\bigtriangleup$ for the symmetric set difference, and $+$ for the disjoint union, we have:
\begin{theorem}\label{V minus S = CV minus CS} {\sl If $S\bm =V$ for some concise message $\bm$ (thus $S\neq V$), then $\widehat  V\setminus \widehat  S = \CCC (\bm)$, and so $\widehat V\bigtriangleup \widehat S = \CCC(\bm) + \CCC(\widetilde\bm)$.
}
\end{theorem}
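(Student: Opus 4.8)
The first assertion is the equality $\widehat V\setminus\widehat S=\CCC(\bm)$, and the plan is to prove this and then read off the symmetric-difference claim. Granting the equality in full generality, one applies it with the roles of $S$ and $V$ interchanged and with $\bm$ replaced by $\widetilde\bm$: since $\widetilde\bm$ is concise for $V$ with $V\widetilde\bm=S$, this gives $\widehat S\setminus\widehat V=\CCC(\widetilde\bm)$. By Lemma~\ref{prep lem}(ii) we have $\CCC(\bm)\cap\CCC(\widetilde\bm)=\es$, so the two one-sided differences are disjoint and $\widehat V\bigtriangleup\widehat S=(\widehat V\setminus\widehat S)+(\widehat S\setminus\widehat V)=\CCC(\bm)+\CCC(\widetilde\bm)$. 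Thus everything reduces to proving the two inclusions below (for arbitrary concise $\bm$ and arbitrary $S$, $V$).

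For $\CCC(\bm)\subseteq\widehat V\setminus\widehat S$: if $\tau\in\CCC(\bm)$ then $\tau\in\widehat V$ is immediate from Definition~\ref{def content}, since $\bm$ is itself a concise message producing $V$ with $\tau\in\CCC(\bm)$. If we also had $\tau\in\widehat S$, there would be a concise $\bn$ with $W\bn=S$ and $\tau\in\CCC(\bn)$ for some state $W$; but $\bn$ and $\widetilde\bm$ are both stepwise effective, consistent, and produce $S$, so by condition [M4] of Theorem~\ref{equiv axioms} they are jointly consistent, i.e.\ $\bn\widetilde\bm$ is consistent --- contradicting $\tau\in\CCC(\bn)$ together with $\tilde\tau\in\CCC(\widetilde\bm)$. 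Hence $\tau\notin\widehat S$.

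For $\widehat V\setminus\widehat S\subseteq\CCC(\bm)$, I would first record an auxiliary fact: \emph{the content of a concise message from a state $X$ to a state $Y$ is contained in the content of every message that is stepwise effective from $X$ to $Y$.} Indeed, if $\bn$ is such a concise message and $\bn'$ is stepwise effective from $X$ to $Y$, then $\widetilde\bn$ is stepwise effective from $Y$ to $X$, so by Lemma~\ref{prep lem}(v) the message $\bn'\widetilde\bn$ is stepwise effective for $X$ and ineffective for $X$; being a return, it is vacuous by Axiom [Mb], its occurrences pairing off into mutual reverses. Since $\widetilde\bn$ is consistent and repetition-free (because $\bn$ is concise), no occurrence in the $\widetilde\bn$-part is paired with another occurrence in that part, so each is paired with an occurrence of the reverse token inside $\bn'$; letting the $\widetilde\bn$-occurrence range over $\CCC(\widetilde\bn)$ and reversing once more yields $\CCC(\bn)\subseteq\CCC(\bn')$. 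Now take $\tau\in\widehat V\setminus\widehat S$ and choose a concise $\bn$ with $Z\bn=V$ and $\tau\in\CCC(\bn)$ for some state $Z$. If $Z=S$, then $\bn$ and $\bm$ are concise messages from the same state $S$ producing the same state $V$, so $\CCC(\bn)=\CCC(\bm)$ by Lemma~\ref{prep lem}(vi) and $\tau\in\CCC(\bm)$. If $Z\neq S$, Axiom [Ma] supplies a concise $\bn_1$ with $Z\bn_1=S$; then $\bn_1\bm$ is stepwise effective from $Z$ to $V$ by Lemma~\ref{prep lem}(v), so the auxiliary fact gives $\CCC(\bn)\subseteq\CCC(\bn_1\bm)=\CCC(\bn_1)\cup\CCC(\bm)$. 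Hence $\tau\in\CCC(\bn_1)$ or $\tau\in\CCC(\bm)$; the first is impossible, since $\bn_1$ is concise and produces $S$, which would put $\tau$ in $\widehat S$. Therefore $\tau\in\CCC(\bm)$.

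The bulk of the work --- the reduction in the first paragraph, the bookkeeping about reverses and about contents of concatenated messages, and the degenerate cases ($S=V$ being excluded by hypothesis, $Z=S$ treated separately) --- is routine. The one step carrying real weight is the auxiliary fact, where Axiom [Mb] (``every return is vacuous'') is used to pass from an arbitrary stepwise effective message to the content of a concise one with the same endpoints; I expect that to be the only step requiring genuine care.
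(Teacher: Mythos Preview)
Your proof is correct. The paper itself omits the proof of this theorem (it is listed among the ``previously published results'' whose proofs are suppressed), so there is no in-paper argument to compare against; your two-inclusion strategy, the use of [M4] for $\CCC(\bm)\subseteq\widehat V\setminus\widehat S$, and the auxiliary fact extracted from Axiom~[Mb] for the reverse inclusion are all sound and are exactly the kind of argument one finds in the cited sources.
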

\begin{theorem}\label{|S| = |V|} {\sl For any token $\tau$ and any state
$S$, we have either $\tau \in \widehat S$ or $\tilde\tau \in \widehat S$;
so, $|\widehat S| =|\widehat V|$  for any two states $S$ and~$V$ with
$S = V$
if and only if
 $\widehat S =\widehat V$.
Moreover, if $\SSS$ is finite, then $|\widehat S| = |\TTT|/2$ for any $S\in\SSS$.
}
\end{theorem}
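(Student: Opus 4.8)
The plan is to reduce everything to the first assertion: that for every token $\tau$ and every state $S$, exactly one of $\tau$ and $\tilde\tau$ belongs to $\widehat S$. Once this is available, the Remark following Definition \ref{def content} (a content never contains a token together with its reverse) and Lemma \ref{prep lem}(i) (no token is its own reverse) together say that the reverse map $\tau\mapsto\tilde\tau$ is a fixed-point-free involution of $\TTT$ whose two-element orbits partition $\TTT$, and that $\widehat S$ meets each such orbit in exactly one token; that is, $\widehat S$ is a transversal of this partition. Both remaining claims then drop out of that description.

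For the first assertion I would argue as follows. Since, by Definition \ref{token system}, a token is not the identity, there is a state $P$ with $P\tau\neq P$; put $Q=P\tau$, so $P$ and $Q$ are adjacent. The one-token message $\tau$ is concise for $P$, so $\tau\in\widehat Q$; likewise its reverse $\tilde\tau$ is concise for $Q$ and produces $P$, so $\tilde\tau\in\widehat P$. Applying Theorem \ref{V minus S = CV minus CS} to the concise message $\tau$ from $P$ to $Q$ gives $\widehat Q\setminus\widehat P=\{\tau\}$, hence $\tau\notin\widehat P$. Now fix an arbitrary state $S$. If $S=P$ then $\tilde\tau\in\widehat P=\widehat S$ and we are done; otherwise, using [Ma], choose a concise message $\bm$ producing $S$ from $P$. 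By Theorem \ref{V minus S = CV minus CS}, $\widehat S\setminus\widehat P=\CCC(\bm)$ and $\widehat S\bigtriangleup\widehat P=\CCC(\bm)+\CCC(\widetilde\bm)$. If $\tau\in\CCC(\bm)$ then $\tau\in\widehat S$; if $\tilde\tau\in\CCC(\bm)$ then $\tilde\tau\in\widehat S$; and if neither lies in $\CCC(\bm)$ then, since $\CCC(\widetilde\bm)$ consists of the reverses of the tokens in $\CCC(\bm)$, neither lies in $\widehat S\bigtriangleup\widehat P$, so $\tilde\tau\in\widehat S$ because $\tilde\tau\in\widehat P$. In every case one of $\tau,\tilde\tau$ lies in $\widehat S$, and by the Remark not both.

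It then remains to read off the two stated consequences. For ``$\widehat S=\widehat V$ if and only if $S=V$'': the forward direction is trivial, and if $S\neq V$ then [Ma] furnishes a concise message $\bm$ from $S$ to $V$ with $\CCC(\bm)\neq\es$, whence $\widehat V\setminus\widehat S=\CCC(\bm)\neq\es$ by Theorem \ref{V minus S = CV minus CS}, so $\widehat V\neq\widehat S$. For ``$|\widehat S|=|\widehat V|$'': send each $\tau\in\widehat S$ to the unique member of $\{\tau,\tilde\tau\}$ lying in $\widehat V$; injectivity follows because $\widehat S$ cannot contain both $\tau$ and $\tilde\tau$ (together with Lemma \ref{prep lem}(i)), and surjectivity is immediate, so this is a bijection $\widehat S\to\widehat V$. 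Equivalently, $|\widehat S|$ equals the number of orbits of the reverse involution, which does not depend on $S$; and when $\SSS$, hence $\TTT$, is finite this number is $|\TTT|/2$, which is the last sentence of the theorem.

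The only genuine subtlety is the transfer step in the second paragraph: one has to notice that Theorem \ref{V minus S = CV minus CS} controls precisely the symmetric difference $\widehat S\bigtriangleup\widehat P$, and then separate the cases in which $\tau$ or $\tilde\tau$ happens to occur in $\bm$ (so membership ``flips'' between $\widehat P$ and $\widehat S$) from the generic case (no flip, and $\tilde\tau$ passes from $\widehat P$ to $\widehat S$). The remaining small points — the side case $S=P$, and the fact that a token, not being the identity, does admit an adjacent pair $P,Q=P\tau$ — are routine.
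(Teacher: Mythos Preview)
The paper does not actually prove this theorem: it is listed among the ``previously published results'' whose proofs are omitted with a reference to \citet{falma97a} and \citet{falma02}. So there is no in-paper argument to compare against, and the relevant question is simply whether your proof is correct. It is.

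Your reduction is the natural one: once you know that each $\widehat S$ is a transversal of the partition of $\TTT$ into reverse pairs $\{\tau,\tilde\tau\}$ (using [M1] via Theorem~\ref{equiv axioms} for existence of reverses, Lemma~\ref{prep lem}(i) for fixed-point-freeness, and the Remark after Definition~\ref{def content} for ``at most one per pair''), all three consequences follow exactly as you say. The transfer step via Theorem~\ref{V minus S = CV minus CS} is clean. One small redundancy: your middle case ``$\tilde\tau\in\CCC(\bm)$'' is actually vacuous, since you have already established $\tilde\tau\in\widehat P$ while $\CCC(\bm)=\widehat S\setminus\widehat P$; the argument thus really only splits on whether $\tau\in\CCC(\bm)$. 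This does no harm. The remark that finiteness of $\SSS$ forces finiteness of $\TTT$ (tokens being self-maps of $\SSS$) is implicit but worth a word.
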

\begin{definition}\label{def orderly circuit} If $\bm$ and $\bn$ are two concise messages producing, from a state $S$, the same state $V\neq S$,  we call $\bm\widetilde\bn$ an {\sl orderly circuit} for $S$. 
\end{definition} 

By Axiom [Mb], an orderly circuit is vacuous; therefore its length must be even.
The following result  is of general interest for orderly circuits. 

\begin{theorem}\label{theorem theta} {\sl Let $S$, $N$, $Q$ and $W$ be four distinct states of a medium and suppose that
\begin{gather}\label{hypo lem theta}
N\tau = S,\quad W\mu = Q,\quad
S\bq = N\bq'=Q,\quad S\bw'=N\bw=W
\end{gather}
for some tokens $\tau$ and $\mu$ and some concise messages $\bq$, $\bq'$, $\bw$ and $\bw'$ (see Figure {\rm \ref{newproof}}). Then, the four following conditions are equivalent: 
\tl

{\rm (i)} $ \ell(\bq) + \ell(\bw) \neq  \ell(\bq') + \ell(\bw')$ and $\mu\neq\tilde \tau$;
\tl

{\rm (ii)}  $\tau = \mu$;
\tl

{\rm (iii)} $\CCC(\bq) = \CCC(\bw)$ and $\ell(\bq) = \ell(\bw)$;
\tl

{\rm (iv)}  $ \ell(\bq) + \ell(\bw) + 2 =  \ell(\bq') + \ell(\bw')$.
\tl

Moreover, any of these conditions implies that 
$\bq\tilde\mu\widetilde\bw\tau$ is an orderly circuit for $S$ with\linebreak  $S\bq\tilde\mu=S\tilde\tau\bw = W$. The converse does not hold.
}
\end{theorem}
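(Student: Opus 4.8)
The plan is to push everything into the language of token contents via Theorem~\ref{V minus S = CV minus CS} and then let the statement fall out of the algebra of symmetric differences of contents. Writing $a=\ell(\bq)$, $a'=\ell(\bq')$, $b=\ell(\bw)$, $b'=\ell(\bw')$, I will use repeatedly that for a concise message from $X$ to $Y$ the set $\widehat X\bigtriangleup\widehat Y=\CCC(\bm)+\CCC(\widetilde\bm)$ has exactly twice the message's length as cardinality and --- by Lemma~\ref{prep lem}(i),(ii) --- is \emph{reverse-closed}, i.e. contains $\sigma$ iff it contains $\tilde\sigma$; in particular it meets $\{\sigma,\tilde\sigma\}$ in either $\es$ or $\{\sigma,\tilde\sigma\}$. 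The hypotheses (\ref{hypo lem theta}) give immediately $\widehat S\bigtriangleup\widehat N=\{\tau,\tilde\tau\}$ with $\tau\in\widehat S\setminus\widehat N$, and $\widehat Q\bigtriangleup\widehat W=\{\mu,\tilde\mu\}$ with $\mu\in\widehat Q\setminus\widehat W$.

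The engine will be the identity $\widehat X\bigtriangleup\widehat Z=(\widehat X\bigtriangleup\widehat Y)\bigtriangleup(\widehat Y\bigtriangleup\widehat Z)$ applied around the square $S,N,W,Q$. It gives $\widehat Q\bigtriangleup\widehat N=(\widehat Q\bigtriangleup\widehat S)\bigtriangleup\{\tau,\tilde\tau\}$ and $\widehat W\bigtriangleup\widehat N=(\widehat W\bigtriangleup\widehat S)\bigtriangleup\{\tau,\tilde\tau\}$, so comparing cardinalities $a'=a\pm1$ and $b=b'\pm1$, the sign being $+$ exactly when $\tau\notin\widehat Q\bigtriangleup\widehat S$, resp. $\tau\notin\widehat W\bigtriangleup\widehat S$; and it gives $(\widehat Q\bigtriangleup\widehat S)\bigtriangleup(\widehat W\bigtriangleup\widehat S)=\widehat Q\bigtriangleup\widehat W=\{\mu,\tilde\mu\}$. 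From these I get two facts: $a+b\neq a'+b'$ iff $\tau$ lies in exactly one of $\widehat Q\bigtriangleup\widehat S$, $\widehat W\bigtriangleup\widehat S$, i.e. iff $\tau\in\{\mu,\tilde\mu\}$; and $a+b+2=a'+b'$ iff $\tau\notin\widehat Q\bigtriangleup\widehat S$ while $\tau\in\widehat W\bigtriangleup\widehat S$. Then (i)$\Leftrightarrow$(ii)$\Leftrightarrow$(iv) is short: (i) reads ``$\tau\in\{\mu,\tilde\mu\}$ and $\mu\neq\tilde\tau$'', which by Lemma~\ref{prep lem}(i) is exactly $\tau=\mu$; if $\tau=\mu$ then $\tau\in\widehat S\cap\widehat Q$ and $\tau\in\widehat S\setminus\widehat W$, so (iv) holds; and conversely (iv) forces $\tau\in\{\mu,\tilde\mu\}$ and excludes $\tau=\tilde\mu$ (which would put $\tilde\tau=\mu$ into $\widehat Q\setminus\widehat S\subseteq\widehat Q\bigtriangleup\widehat S$).

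For (ii)$\Leftrightarrow$(iii): if $\tau=\mu$ the previous computations yield $\widehat W\bigtriangleup\widehat N=\widehat Q\bigtriangleup\widehat S$, and I will check that for each $\sigma$ in this common reverse-closed set one has $\sigma\notin\{\tau,\tilde\tau\}$ and hence $\sigma\in\widehat Q\setminus\widehat S$ iff $\sigma\in\widehat W\setminus\widehat N$ (using the one-element-pair equalities $\widehat S\bigtriangleup\widehat N$ and $\widehat Q\bigtriangleup\widehat W$), so that $\CCC(\bq)=\CCC(\bw)$ and, by Lemma~\ref{prep lem}(ii), $\ell(\bq)=\ell(\bw)$; conversely $\CCC(\bq)=\CCC(\bw)$ forces $\widehat Q\bigtriangleup\widehat S=\widehat W\bigtriangleup\widehat N$, hence $\{\tau,\tilde\tau\}=\{\mu,\tilde\mu\}$, and $\tau=\tilde\mu$ is impossible since it would put $\tau\in\widehat W\setminus\widehat N=\CCC(\bw)=\CCC(\bq)=\widehat Q\setminus\widehat S$ with $\tau\notin\widehat Q$. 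For the ``moreover'' clause, assuming $\tau=\mu$ I get $S\bq\tilde\mu=Q\tilde\mu=W$ and $S\tilde\tau\bw=N\bw=W$, and the content data above give $\mu,\tilde\mu\notin\CCC(\bq)$ and $\tau,\tilde\tau\notin\CCC(\bw)$, so both $\bq\tilde\mu$ and $\tilde\tau\bw$ are concise for $S$; since $\widetilde{(\tilde\tau\bw)}=\widetilde\bw\tau$, the message $\bq\tilde\mu\widetilde\bw\tau=(\bq\tilde\mu)\widetilde{(\tilde\tau\bw)}$ is an orderly circuit for $S$ by Definition~\ref{def orderly circuit}. For the final assertion I will exhibit a counterexample to the converse: the path $100\!-\!000\!-\!001\!-\!011$ of the $3$-cube, regarded as a medium, satisfies (\ref{hypo lem theta}) with $N=000$, $S=100$, $Q=001$, $W=011$, has $\ell(\bq)+\ell(\bw)=\ell(\bq')+\ell(\bw')=4$ and $\mu\neq\tau$, yet $\bq\tilde\mu\widetilde\bw\tau$ is still an orderly circuit for $S$.

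I expect the main obstacle to be bookkeeping rather than any single deep step: the dichotomy ``$a'=a\pm1$'' hinges on $\widehat Q\bigtriangleup\widehat S$ being reverse-closed and on Lemma~\ref{prep lem}(i) (so $\{\tau,\tilde\tau\}$ genuinely has two elements), and the implication (ii)$\Rightarrow$(iii) requires keeping straight exactly which of $\tau,\tilde\tau,\mu,\tilde\mu$ lies in each of $\widehat S,\widehat N,\widehat Q,\widehat W$. Once Theorem~\ref{V minus S = CV minus CS} is invoked, the rest is mechanical.
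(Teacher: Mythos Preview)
Your argument is correct and follows a route genuinely different from the paper's. The paper proves the cycle (i) $\Rightarrow$ (ii) $\Leftrightarrow$ (iii) $\Rightarrow$ (iv) $\Rightarrow$ (i) by working directly with the vacuous return $\tau\bq\tilde\mu\widetilde\bw$ and doing a case analysis on where the unique occurrence of $\tilde\tau$ must sit inside it (Case~[a]: $\tilde\tau\in\CCC(\bq)\cap\CCC(\bw')$; Case~[b]: $\tilde\tau\in\CCC(\widetilde\bw)\cap\CCC(\widetilde{\bq'})$), pushing each case to a length identity. You instead translate the whole configuration into the algebra of contents via Theorem~\ref{V minus S = CV minus CS}, reducing everything to the dichotomy $a'=a\pm 1$, $b'=b\mp 1$ governed by whether $\{\tau,\tilde\tau\}$ meets the reverse-closed sets $\widehat Q\bigtriangleup\widehat S$ and $\widehat W\bigtriangleup\widehat S$, together with the identity $(\widehat Q\bigtriangleup\widehat S)\bigtriangleup(\widehat W\bigtriangleup\widehat S)=\{\mu,\tilde\mu\}$. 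This eliminates the case split and makes (i)$\Leftrightarrow$(ii)$\Leftrightarrow$(iv) almost immediate; it also yields a cleaner (ii)$\Leftrightarrow$(iii), since $\widehat Q\bigtriangleup\widehat S=\widehat W\bigtriangleup\widehat N$ drops out of one symmetric-difference computation rather than a token-by-token matching inside a vacuous message. The paper's approach stays closer to Axiom~[Mb] and to message combinatorics, which keeps it self-contained; your approach leans harder on the content machinery but is more systematic and avoids the somewhat delicate bookkeeping of the two cases. For the ``moreover'' clause the two arguments converge: both show $\mu,\tilde\mu\notin\CCC(\bq)$ and $\tau,\tilde\tau\notin\CCC(\bw)$, the paper via~[M4], you via $\tau\notin\widehat Q\bigtriangleup\widehat S=\widehat W\bigtriangleup\widehat N$. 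Your counterexample (the $4$-vertex path $100\text{--}000\text{--}001\text{--}011$) differs from the paper's hexagonal one with $\bq=\alpha\tilde\tau$, $\bw=\tilde\mu\alpha$, and is slightly more economical: in your example $\bq\tilde\mu$ and $\tilde\tau\bw$ are the \emph{same} concise message, so the orderly circuit is simply a concise message followed by its own reverse.
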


\begin{comment}
q' and w' in the drawing appear with a "comma" rather than a "prime".
\end{comment}

{\begin{figure}[h]
\centerline{\includegraphics[scale=0.69]{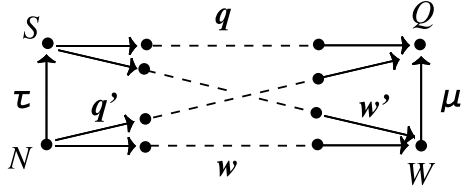}}
\caption[newproof]{\label{newproof} {\sc For Theorem \ref{theorem theta}. Illustration of the conditions listed in (\ref{hypo lem theta}).}}
\end{figure}
}

\begin{proof} 
We prove (i) $\Rightarrow$ (ii) $\eq$ (iii) $\Rightarrow$ (iv) $\Rightarrow$ (i).  
\tl

(i) $\Rightarrow$ (ii). Suppose that   $\tau\neq\mu$. The token $\tilde\tau$ must occur exactly once in either  $\bq$ or in $\widetilde\bw$. Indeed, we have $\mu\neq \tilde\tau$, both $\bq$ and $\bw$ are concise, and the message $\tau\bq\tilde\mu\widetilde\bw$ is  a return for $S$, and so is vacuous by [Ma]. It can be verified that each of the two mutually exclusive, exhaustive cases: 
[a] $\,\,\tilde\tau\in\CCC(\bq)\cap\CCC(\bw')$; and  {[b]} \,\,$\tilde\tau\in\CCC(\widetilde\bw)\cap\CCC(\widetilde\bq')$ lead to
\begin{equation}\label{4cases 0}
\ell(\bq) + \ell(\bw) = \ell(\bq') + \ell(\bw'),
\end{equation}
contradicting (i). Thus, we must have $\tau = \mu$. 
\tl

We only prove Case [a]. The other case is treated similarly.  Since $\tilde\tau$ is in $\CCC(\bq)$, neither $\tau$ nor 
$\tilde\tau$ can be in $\CCC(\bq')$. Indeed, both $\bq$ and $\bq'$ are concise and $\bq\widetilde\bq'\tau$ is a return for~$S$. It follows that both $\tilde\tau\bq'$ and 
$\bq$ are concise messages producing $Q$ from $S$. By Theorem~\ref{V minus S = CV minus CS}, we must have $\CCC(\tilde\tau\bq') = \CCC(\bq)$, which implies $\ell(\tilde\tau\bq') = \ell(\bq)$, and so 
\begin{equation}\label{4case 1}
 \ell(\bq) = \ell(\bq') + 1.
\end{equation}
A  argument along the same lines shows that
\begin{equation}\label{4case 2}
\ell(\bw) + 1 = \ell(\bw').
\end{equation}
Adding (\ref{4case 1}) and (\ref{4case 2}) and simplifying, we obtain (\ref{4cases 0}). The proof of Case [b] is similar.
\tl

(ii) $\eq$ (iii). If $\mu = \tau$, it readily follows (since both $\bq$ and $\bw$ are concise and $S\bq\tilde\tau\widetilde\bw\tau = S$) that any  token in $\bq$ must have a reverse in $\widetilde\bw$ and vice versa. 
This implies $\CCC(\bq) = \CCC(\bw)$, which in turn imply $\ell(\bq) = \ell(\bw)$, and so (iii) holds.  As $\bq\tilde\mu\widetilde\bw\tau$ is vacuous, it is clear that (iii) 
implies~(ii).
\tl

(iii) $\Rightarrow$ (iv). Since  (iii) implies (ii), we have $\tau\in \widehat Q\setminus\widehat N$ by Theorem \ref{V minus S = CV minus CS}. But both $\bq$ and $\bq'$ are concise, so $\tau\in \CCC(\bq')\setminus\CCC(\bq)$. As $\tau\bq\widetilde\bq'$ is vacuous for $N$,  we must have $\CCC(\bq) + \{\tau\}= \CCC(\bq')$, yielding 
\begin{equation}\label{bq = bq' + 1}
\ell(\bq) + 1= \ell(\bq').
\end{equation} 
A similar argument gives $\CCC(\bw) + \{\tau\} = \CCC(\bw') $ and 
\begin{equation}\label{bw = bw' + 1}
\ell(\bw) + 1= \ell(\bw').
\end{equation} 
Adding (\ref{bq = bq' + 1}) and  (\ref{bw = bw' + 1})  yields (iv).
\tl

(iv) $\Rightarrow$ (i). As (iv) is a special case of the first statement in (i), we only have to prove that $\mu\neq\tilde\tau$. Suppose that $\mu=\tilde\tau$.
We must assign the token $\tilde\tau$ consistently so to ensure the vacuousness of the messages $\bq\widetilde\bq'\tau$ and $\tau\bw'\widetilde\bw$. By Theorem~\ref{V minus S = CV minus CS}, $\CCC(\bq)=\widehat Q\setminus \widehat S$. Since $\tilde\tau\in\widehat Q$ and, by Theorem~\ref{|S| = |V|}, $\tilde\tau\notin\widehat S$, the only possibility is $\tilde\tau\in\CCC(\bq)\setminus\CCC(\bq')$. For similar reasons $\tau\in \CCC(\bw)\setminus\CCC(\bw')$. 
We obtain the two concise messages   $\tilde\tau\bq'$ and $\bq$ producing $Q$ from $S$, and the two concise messages  $\bw$ and $\tau\bw'$   producing $W$ from $N$. This gives $ \ell(\bq)=\ell(\tilde\tau\bq')$  and $\ell(\bw) = \ell(\tau\bw')$. We obtain so
$\ell(\bq) = \ell(\bq') + 1$ and
$\ell(\bw) =\ell(\bw') + 1$, which leads to
$
\ell(\bq) + \ell(\bw) = \ell(\bq') + \ell(\bw') + 2
$
and contradicts (iv). Thus,  (iv) implies (i). We conclude that the four conditions (i)-(iv) are equivalent.
\tl

We now show that, under the hypotheses of the theorem, (ii) implies that $\bq\tilde\mu\widetilde\bw\tau$ is an orderly return for $S$ with $S\bq\tilde\mu=S\tilde\tau\bw = W$. Both $\bq$ and $\bw$ are concise by hypothesis. We cannot have  $\mu$ in $\CCC(\bq)$ because then $\tilde\mu$ is in $\CCC(\widetilde\bq)$ and the two concise messages $\widetilde\bq$ and $\tau = \mu$ producing $S$ are not jointly consistent, yielding a contradiction of Condition [M4] in Theorem \ref{equiv axioms}. Similarly, we cannot have $\tilde\mu$ in $\CCC(\bq)$ since the two concise messages $\bq$ and $\mu$ producing $Q$ would not be jointly consistent. Thus, $\bq\tilde\mu$ is a concise message producing $W$ from $S$. For like reasons, with $\tau = \mu$, $\tilde\tau\bw$  is a concise message producing $W$ from $S$. We conclude that, with $\tau=\mu$, the message $\bq\tilde\mu\widetilde\bw\tau$ is an orderly return for $S$. The example of Figure \ref{newproof2}, in which we have
\begin{gather*}
\mu\neq\tau,\quad\bq=\aa\tilde\tau,\quad\bw=\tilde\mu\aa,\quad\bw'=\aa\tilde\tau\tilde\mu,\,\text{ and }\,\bq'=\aa,
\end{gather*}
displays the orderly return $\aa\tilde\tau\tilde\mu\tilde\aa\mu\tau$ for $S$. It serves as a counterexample to the implication: if  $\bq\tilde\mu\widetilde\bw\tau$ is an orderly return for $S$, then $\tau = \mu$.
\end{proof}

{\begin{figure}[h]
\centerline{\includegraphics*[scale=0.7]{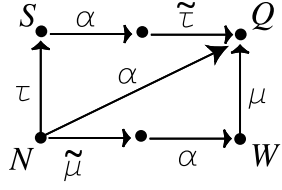}}
\caption[newproof2]{\label{newproof2} {\sc Under the hypotheses of Theorem \ref{theorem theta}, the hypothesis that $\bq\tilde\mu\widetilde\bw\tau$ is an orderly circuit for $S$ does not imply $\tau=\mu$, with $\bq = \aa\tilde\tau$, $\bw=\tilde\mu\aa$, $\bq'=\aa$, and $\bw'=\aa\tilde\tau\tilde\mu$.}}
\end{figure}
}

 In Definition \ref{def orderly circuit}, the concept of an orderly circuit was specified with respect to a particular state. The next definition and theorem concern a situation in which a circuit is orderly with respect to everyone of its states.
 In such a case, any token occurring in the circuit must have its reverse at the exact `opposite' place in the circuit (see Theorem \ref{theo opposite}(i)). 
\begin{definition}\label{regular}
Let $\tau_1\ldots\tau_{2n}$ be an orderly return for a state $S$.  For $1\leq i\leq n$, the two tokens $\tau_i$ and $\tau_{i+n}$ are called {\sl opposite}.
A return  $\tau_1\ldots\tau_{2n}$ from $S$ is {\sl regular}  if it is orderly and, for $1\leq i\leq n$, the message $\tau_i\tau_{i+1}\ldots\tau_{i+ n-1}$ is concise for $S\tau_1\cdots\tau_{i-1}$.
\end{definition}

\begin{theorem}\label{theo opposite} 
{\sl Let $\bm=\tau_1\ldots\tau_{2n}$ be an orderly return for some state $S$. Then the following three conditions are equivalent.
\tl

{\rm (i)} The opposite tokens of $\bm$ are mutual reverses.
\tl

{\rm (ii)} The return $\bm$ is regular.
\tl

{\rm (iii)} For $1\leq i\leq 2n-1$, the message $\tau_i\ldots\tau_{2n}\ldots\tau_{i-1}$ is an orderly return for the state $S\tau_1\cdots\tau_{i-1}$.
}
\end{theorem}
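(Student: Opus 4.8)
The plan is to prove the cycle of implications (i) $\Rightarrow$ (iii) $\Rightarrow$ (ii) $\Rightarrow$ (i), the only hard step being (ii) $\Rightarrow$ (i). Set $S_j=S\tau_1\cdots\tau_j$, so $S_0=S_{2n}=S$, read every index of the $\tau$'s modulo $2n$, and for $j\in\mathbb Z$ write $\bp_j=\tau_j\tau_{j+1}\cdots\tau_{j+n-1}$, the half-cycle issued from $S_{j-1}$. First I would record a few preliminaries: a contiguous sub-message of a concise message is concise, and so is the reverse of a concise message; since $\bm$ is an orderly return, it splits at its midpoint as $\bm=\bp_1\bp_{n+1}$ with $\bp_1$ and $\widetilde{\bp_{n+1}}$ concise and both producing $S_n$ from $S$ (Lemma~\ref{prep lem}(ii),(vi) forces the two factors of the defining decomposition to have equal length, hence length $n$), so $\bp_{n+1}$ is concise for $S_n$ as well; and, by Lemma~\ref{prep lem}(vi), $\CCC(\bp_1)=\CCC(\widetilde{\bp_{n+1}})$, from which — together with Lemma~\ref{prep lem}(i) — one checks that $\bm$ has no repeated token, while no token occurs together with its reverse inside a single half.

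For (i) $\Rightarrow$ (iii): condition (i), that $\tau_i$ and $\tau_{i+n}$ are mutual reverses for $1\le i\le n$, is plainly the same as $\tilde\tau_j=\tau_{j+n}$ for \emph{every} $j$. Then each $\bp_j$ is concise for $S_{j-1}$: it is stepwise effective and repeats no token, and it is consistent because $\tilde\tau_a=\tau_b$ with $a,b\in\{j,\dots,j+n-1\}$ would give $\tau_{a+n}=\tau_b$, forcing $a+n\equiv b\pmod{2n}$, which is impossible since $a+n$ lies in the complementary arc $\{j+n,\dots,j+2n-1\}$. Since $\bp_{i+n}$ runs from $S_{i+n-1}$ to $S_{i+2n-1}=S_{i-1}$, the message $\widetilde{\bp_{i+n}}$ is concise from $S_{i-1}$ to $S_{i+n-1}$, and so is $\bp_i$; thus $\tau_i\cdots\tau_{2n}\cdots\tau_{i-1}=\bp_i\,\widetilde{\bp_{i+n}}$ is, by Definition~\ref{def orderly circuit}, an orderly return for $S_{i-1}$, i.e.\ (iii) holds. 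For (iii) $\Rightarrow$ (ii): writing the orderly return $\tau_i\cdots\tau_{2n}\cdots\tau_{i-1}$ (for $1\le i\le n$) in the form of Definition~\ref{def orderly circuit} and invoking the midpoint argument, its first factor is $\bp_i$, which is therefore concise for $S_{i-1}$; as $\bm$ is orderly by hypothesis, this is exactly the assertion that $\bm$ is regular.

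The implication (ii) $\Rightarrow$ (i) is where the real work lies. The case $n=1$ is trivial (an orderly return of length $2$ is vacuous by [Mb], hence $\tau_2=\tilde\tau_1$, and all three conditions hold), so assume $n\ge 2$; I would prove $\tilde\tau_i=\tau_{n+i}$ for $i=1,\dots,n$ by induction on $i$, each step being one application of Theorem~\ref{theorem theta}. Assuming $\tilde\tau_k=\tau_{n+k}$ for all $k<i$, apply Theorem~\ref{theorem theta} with $S=S_{i-1}$, $N=S_i$, $\tau=\tilde\tau_i$, $Q=S_{n+i}$, $W=S_{n+i-1}$, $\mu=\tau_{n+i}$, and with the concise messages $\bw'=\bp_i$, $\bq'=\bp_{i+1}$, $\bw=\tau_{i+1}\cdots\tau_{i+n-1}$ and $\bq=\tilde\tau_{i-1}\tilde\tau_{i-2}\cdots\tilde\tau_{n+i+1}$. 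Here $\bw'$ and $\bq'$ are concise by (ii) (using, when $i=n$, that $\bp_{n+1}$ is concise), $\bw$ is a sub-message of $\bp_i$, and $\bq$ — the path reading $\bm$ backwards from $S_{i-1}$ to $S_{n+i}$, of length $n-1$ — is concise: it is stepwise effective with distinct tokens, and, since each half of $\bm$ is consistent, an inconsistency in $\bq$ could only come from a pair $\tau_a,\tau_b$ of mutual reverses with $a\in\{1,\dots,i-1\}$ and $b\in\{n+i+1,\dots,2n\}$, which the induction hypothesis excludes because it would give $\tau_b=\tilde\tau_a=\tau_{n+a}$, hence $b\equiv n+a\pmod{2n}$, impossible since $n+a\le n+i-1<n+i+1\le b$. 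The hypotheses~(\ref{hypo lem theta}) are immediate, and $S_{i-1},S_i,S_{n+i-1},S_{n+i}$ are pairwise distinct (two pairs are consecutive states of $\bm$, the other four are joined by the non-empty concise messages just listed). Finally the lengths are forced:
\[
\ell(\bq)+\ell(\bw)+2=(n-1)+(n-1)+2=2n=n+n=\ell(\bq')+\ell(\bw'),
\]
so condition~(iv) of Theorem~\ref{theorem theta} holds, hence so does its condition~(ii): $\tau=\mu$, that is $\tilde\tau_i=\tau_{n+i}$, so that $\tau_i$ and $\tau_{n+i}$ are mutual reverses. This closes the induction and completes the proof. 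The main obstacle is precisely setting up this last step so that all four auxiliary messages are genuinely concise — the one real subtlety being the consistency of the ``backward'' message $\bq$, which is not a sub-message of $\bm$ and for which the induction hypothesis is exactly what is needed.
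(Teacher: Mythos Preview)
Your proof is correct, but it takes a different cycle than the paper's: you prove (i)~$\Rightarrow$~(iii)~$\Rightarrow$~(ii)~$\Rightarrow$~(i), whereas the paper proves (i)~$\Rightarrow$~(ii)~$\Rightarrow$~(iii)~$\Rightarrow$~(i). Both arguments locate the hard step in an application of Theorem~\ref{theorem theta}, but at different places. In the paper, the hard implication is (iii)~$\Rightarrow$~(i): assuming \emph{every} rotation is orderly, all the half-messages $\bp_j$ are concise, so for each $i$ the four messages required by Theorem~\ref{theorem theta} are immediately available as (forward or reversed) contiguous arcs of $\bm$, and one application of (iv)~$\Rightarrow$~(ii) yields $\tilde\tau_i=\tau_{i+n}$ with no induction. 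In your route the hard implication is (ii)~$\Rightarrow$~(i): regularity only guarantees conciseness of $\bp_1,\ldots,\bp_{n}$ (and $\bp_{n+1}$ from the preliminaries), so your ``backward'' message $\bq$ crosses the seam at $S_0$ and is not a sub-message of any single $\bp_j$; you then need the induction hypothesis $\tilde\tau_k=\tau_{n+k}$ for $k<i$ precisely to rule out an inconsistency in $\bq$. This works, but the paper's ordering buys a cleaner, non-inductive verification of the Theorem~\ref{theorem theta} hypotheses. One small slip: in your (i)~$\Rightarrow$~(iii) paragraph the displayed identity should read $\tau_i\cdots\tau_{2n}\cdots\tau_{i-1}=\bp_i\,\bp_{i+n}$, not $\bp_i\,\widetilde{\bp_{i+n}}$; the orderly circuit is $\bp_i\cdot\widetilde{(\widetilde{\bp_{i+n}})}=\bp_i\,\bp_{i+n}$, which is indeed the rotation, so your conclusion stands.
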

\def\imp{\Rightarrow}
{\sc Proof.} 
We prove  (i) $\imp$ (ii) $\imp$ (iii) $\imp$ (i). In what follows $S_i=S\tau_0\tau_1\ldots\tau_i$ for $0\le i\le 2n$, so $S_0=S_{2n}=S$.
\tl

(i) $\imp$ (ii). Since $\bm$ is an orderly return, for $1\le j\le n$, there is only one occurrence of the pair $\{\tau_j,\tilde\tau_j\}$ in $\bm$. Since $\tilde\tau_j=\tau_{j+n}$, there are no occurrences of $\{\tau_j,\tilde\tau_j\}$ in $\bp=\tau_i\cdots\tau_{i+n-1}$, so it is a  concise message for $S_{i-1}$.
\tl

(ii) $\imp$ (iii). Since $\bm$ is a regular return, any message $\bp=\tau_i\cdots\tau_{i+n-1}$ is concise, so any token of this message has a reverse in the message $\bq=\tau_{i+n}\ldots\tau_{2n}\ldots\tau_{i-1}$. Since $\bp$ is concise and $\ell(\bq)=n$, the message $\bq$ is concise. It follows that $\bp\bq$ is an orderly return for the state $S_{i-1}$.
\tl

(iii) $\imp$ (i). Since the message $\tau_i\ldots\tau_{2n}\ldots\tau_{i-1}$ is an orderly return for $S_{i-1}$, the messages $\bq=\tau_{i+1}\ldots\tau_{i+n-1}$ and $\bq'=\tau_i\ldots\tau_{i+n-1}$ are concise for the states $S'=S_i$ and $N=S_{i-1}$, respectively, and produce the state $Q=S_{i+n-1}$. Likewise, the messages $\bw=\tilde\tau_{i-1}\ldots\tilde\tau_{2n}\ldots\tilde\tau_{i+n}$ and $\bw'=\tilde\tau_i\ldots\tilde\tau_{2n}\ldots\tilde\tau_{i+n}$ are concise for the states $N=S_{i-1}$ and $S'=S_i$, respectively, and produce the state $W=S_{i+n}$. It is clear that $\ell(\bq)+\ell(\bw)+2=\ell(\bq')+\ell(\bw')$. By Theorem \ref{theorem theta}, $\tau_{i+n}=\tilde\tau_i$.
\EOP

\section*{The Graph of a Medium}
For graph-theoretical concepts and terminology, we usually follow \citet{bondy95}.
\begin{definition} \label{so1 media graph}
{\rm A {\sl graph representation} of a medium $(\SSS,\TTT)$ is a bijection $\gg:\SSS\rightarrow V$, where $V$ is a set of vertices of a graph $(V,E)$, such that two distinct states $S$ and $T$ are adjacent whenever $\{\gg(S),\gg(T)\}$ is an edge of the graph; formally,
\begin{equation}\label{def graph rep}
\hspace*{-.5cm} \{\gg(S),\gg(T)\}\in E\quad\EQ\quad(\exists\tau\in\TTT)(S\tau = T)\quad(S,T\in\SSS,\,S\neq T).
\end{equation}
We say then  that the  graph $(V,E)$, which has no loops, represents the medium. A graph $(V,E)$ representing a medium $(\SSS,\TTT)$ is called the {\sl graph of the medium} $(\SSS,\TTT)$  if  $V=\SSS$, the edges in $E$ are defined as in (\ref{def graph rep}), and  $\gg$ is the identity mapping. Clearly,  any medium has its  graph. We shall prove in this paper that the converse also holds, namely: the graph of a medium defines its medium 
(see Theorem \ref{1-1 cor M-G}). 
We recall that two graphs $(V,E)$ and $(V',E')$ are isomorphic if there is a bijection $\varphi:V\to V'$ such that
\begin{equation}\label{iso graph}
\{P,Q\} \in E\EQ\{\varphi(P),\varphi(Q) \}\in E'\qquad(P,Q\in E,\,P\neq Q).
\end{equation}
}
\end{definition}
\begin{lemma}\label{G iso G rep M}{\sl A graph isomorphic to a graph representing a medium $\MMM$ also represents~$\MMM$.}
\end{lemma}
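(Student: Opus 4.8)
The statement to prove is Lemma~\ref{G iso G rep M}: a graph isomorphic to a graph representing a medium $\MMM$ also represents $\MMM$.

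\medskip

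The plan is to unwind the definitions and transport the bijection across the isomorphism. Suppose $(V,E)$ represents the medium $\MMM=(\SSS,\TTT)$ via the bijection $\gg:\SSS\to V$ satisfying~(\ref{def graph rep}), and suppose $\varphi:V\to V'$ is a graph isomorphism onto $(V',E')$ satisfying~(\ref{iso graph}). First I would form the composite map $\gg' = \varphi\circ\gg:\SSS\to V'$. Since $\gg$ is a bijection onto $V$ and $\varphi$ is a bijection onto $V'$, the composite $\gg'$ is a bijection from $\SSS$ onto $V'$, which is the first requirement in Definition~\ref{so1 media graph}.

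\medskip

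Next I would verify the defining biconditional~(\ref{def graph rep}) for $\gg'$. Fix two distinct states $S,T\in\SSS$. Because $\gg$ is injective, $\gg(S)\neq\gg(T)$, so the isomorphism condition~(\ref{iso graph}) applies and gives $\{\gg(S),\gg(T)\}\in E$ if and only if $\{\varphi(\gg(S)),\varphi(\gg(T))\}\in E'$, i.e. $\{\gg'(S),\gg'(T)\}\in E'$. On the other hand, the hypothesis that $(V,E)$ represents $\MMM$ via $\gg$ gives $\{\gg(S),\gg(T)\}\in E$ if and only if $(\exists\tau\in\TTT)(S\tau=T)$. Chaining these two equivalences yields $\{\gg'(S),\gg'(T)\}\in E'$ if and only if $(\exists\tau\in\TTT)(S\tau=T)$, which is exactly~(\ref{def graph rep}) for the bijection $\gg':\SSS\to V'$. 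One should also note that $(V',E')$ has no loops: it is isomorphic to $(V,E)$, which has none, and in any case~(\ref{iso graph}) only constrains pairs of distinct vertices, consistent with the convention in Definition~\ref{so1 media graph} that graph representations land in loopless graphs. Hence $(V',E')$ represents $\MMM$.

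\medskip

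There is essentially no obstacle here: the lemma is a bookkeeping statement, and the only point requiring a modicum of care is that the isomorphism condition~(\ref{iso graph}) is stated for distinct vertices, so one must invoke the injectivity of $\gg$ before applying it — which is immediate. The lemma is worth recording separately only because it licenses later arguments to replace a graph representing a medium by any isomorphic copy, in particular by a concrete combinatorial model.
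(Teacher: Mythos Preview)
Your proof is correct; the paper itself states Lemma~\ref{G iso G rep M} without proof, treating it as evident, so your composition $\gg'=\varphi\circ\gg$ and the chain of equivalences is precisely the routine verification that the paper leaves to the reader.
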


It is intuitively clear that shortest paths in the graph of a medium correspond to concise messages of that medium. Our next lemma states that fact precisely.
\tl
\begin{lemma} \label{so1 shortest path}
{\sl Let $\gg:\SSS\rightarrow V$ be the representation of a medium
$(\SSS,\TTT)$ by a graph $G=(V,E)$.   If $\bm=\tau_1\ldots\tau_m$ is a concise message producing a state $T$ from a state $S$, then the sequence of vertices $(\gg (S_i))_{0\leq i\leq m}$, where $S_i=S\tau_0\tau_1\cdots\tau_i$, for $0\leq i\leq m$, forms a shortest path joining $\gg(S)$ and $\gg(T)$ in $G$. Conversely, if a sequence $(\gg(S_i))_{0\leq i\leq m}$ is a shortest path connecting $\gg(S_0)=\gg(S)$ and $\gg(S_m)=\gg(T)$, then $\bm=\tau_1\ldots\tau_m$ with
$S\tau_0\tau_1\cdots\tau_i = S_i$, for $0\leq i \leq m$, is a concise message producing $T$ from $S$.
}
\end{lemma}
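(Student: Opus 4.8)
The plan is to route both implications through the token contents and Theorem~\ref{V minus S = CV minus CS}, which translates distances in $G$ into sizes of symmetric differences $\widehat S\bigtriangleup\widehat T$. Two preliminary observations carry the argument. First, if $P$ and $Q$ are adjacent states with $P\sigma=Q$, then the one-token message $\sigma$ is concise for $P$, so Theorem~\ref{V minus S = CV minus CS} together with Lemma~\ref{prep lem}(i) gives $\widehat Q\bigtriangleup\widehat P=\{\sigma,\tilde\sigma\}$, a set of size $2$. Second, since $G$ is loopless and its edges are exactly the adjacency pairs (see (\ref{def graph rep})), any walk $\gg(U_0),\gg(U_1),\dots,\gg(U_k)$ in $G$ induces a stepwise effective message $\sigma_1\cdots\sigma_k$ producing $U_k$ from $U_0$ with $U_{j-1}\sigma_j=U_j$ (each $\sigma_j$ exists by (\ref{def graph rep}), is stepwise effective because consecutive walk-vertices differ, and is the unique such token by Lemma~\ref{prep lem}(iii)). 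Applying the first observation step by step and the elementary fact that $|A_0\bigtriangleup A_k|\le\sum_{j=1}^{k}|A_{j-1}\bigtriangleup A_j|$, \emph{with equality exactly when the sets $A_{j-1}\bigtriangleup A_j$ are pairwise disjoint}, we get $|\widehat{U_k}\bigtriangleup\widehat{U_0}|\le 2k$, equality holding iff the sets $\{\sigma_j,\tilde\sigma_j\}$ are pairwise disjoint.

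From these, for any two distinct states $S,T$ one reads off that a concise message $\bm'$ from $S$ to $T$ exists (Axiom~[Ma]) and that $|\widehat S\bigtriangleup\widehat T|=\ell(\bm')+\ell(\widetilde{\bm'})=2\ell(\bm')$ by Theorem~\ref{V minus S = CV minus CS} and Lemma~\ref{prep lem}(ii); moreover the vertex sequence of $\bm'$ is a walk of length $\ell(\bm')$ joining $\gg(S)$ and $\gg(T)$, while the second observation shows every such walk has length at least $\ell(\bm')$. Hence the distance $d_G(\gg(S),\gg(T))$ equals $\ell(\bm')=\tfrac12|\widehat S\bigtriangleup\widehat T|$, and in particular all concise messages from $S$ to $T$ have this common length.

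For the forward implication, let $\bm=\tau_1\cdots\tau_m$ be concise with $S\bm=T$. By the first observation $(\gg(S_i))_{0\le i\le m}$ is a walk; it is a path because $\gg(S_i)=\gg(S_j)$ with $i<j$ forces $S_i=S_j$, making $\tau_{i+1}\cdots\tau_j$ a nonempty return for $S_i$, hence vacuous by Axiom~[Mb], which contradicts the consistency of $\bm$. Its length $m$ equals $d_G(\gg(S),\gg(T))$ by the previous paragraph, so it is a shortest path. For the converse, let $(\gg(S_i))_{0\le i\le m}$ be a shortest path, so $m=d_G(\gg(S),\gg(T))$. The second observation yields a stepwise effective message $\bm=\tau_1\cdots\tau_m$ with $S\tau_0\tau_1\cdots\tau_i=S_i$; since $|\widehat S\bigtriangleup\widehat T|=2m$, the inequality $|\widehat S\bigtriangleup\widehat T|\le 2m$ for this walk is an equality, so the sets $\{\tau_i,\tilde\tau_i\}$ are pairwise disjoint. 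Thus $\tau_i\ne\tau_j$ and $\tau_i\ne\tilde\tau_j$ for $i\ne j$, so $\bm$ has no repeated token and is consistent; being also stepwise effective, it is concise.

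I expect the main obstacle to be the equality case of the triangle inequality for symmetric differences inside the second observation: the entire argument hinges on the fact that a length-minimizing walk must flip each content-coordinate exactly once, which is simultaneously what forces optimality in the forward direction and conciseness in the converse. The only other delicate point is the bookkeeping that every walk in a loopless $G$ induces a (merely) stepwise effective message — without this one cannot even begin to compare walk lengths with content sizes.
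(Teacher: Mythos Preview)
Your argument is correct, but it takes a genuinely different route from the paper. The paper's proof is a direct two-line application of Axiom~[Mb]: given the concise message $\bm$ and \emph{any} path of length $n$ from $\gg(S)$ to $\gg(T)$, pull back along $\gg$ to get a stepwise effective message $\bn$ producing $S$ from $T$; then $\bm\bn$ is a return, hence vacuous, and since $\bm$ is concise every token of $\bm$ must find its reverse in $\bn$, forcing $m\le n$. The converse is handled by the same idea with the roles swapped. No contents, no symmetric differences.

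Your approach instead factors through the content map $S\mapsto\widehat S$ and Theorem~\ref{V minus S = CV minus CS}, deriving along the way the formula $d_G(\gg(S),\gg(T))=\tfrac12|\widehat S\bigtriangleup\widehat T|$ and characterizing shortest walks as exactly those on which the step-contents $\{\tau_i,\tilde\tau_i\}$ are pairwise disjoint. This is more machinery than the lemma needs, but it buys you something: the distance formula is essentially the statement that $\gg$ composed with $S\mapsto\widehat S$ is an isometric embedding into the hypercube, so you have implicitly re-proved the partial-cube representation. The paper's argument is quicker and more self-contained; yours is more informative. One cosmetic slip: in the forward paragraph you attribute ``$(\gg(S_i))$ is a walk'' to your first observation, but that observation concerns content sizes; the walk property follows directly from stepwise effectiveness and (\ref{def graph rep}).
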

\tl

{\sc Proof.} (Necessity.) Let $\gg(P_0)=\gg(S),\gg(P_1),\ldots,\gg(P_n)=\gg(T)$ be a path in $G$ joining $\gg(S)$ to $\gg(T)$. Correspondingly, there is a stepwise effective message $\bn=\rho_1\cdots\rho_n$ such that $P_i=T\rho_1\cdots\rho_{n-i}$ for $0\leq i<n$. 
The message $\bm\bn$ is a return for $S$. By Axiom [Mb], this message is vacuous. Since $\bm$ is a concise message for $S$, we must have
$
\ell (\bm) = m\leq \ell(\bn) = n.
$ 
\tl

(Sufficiency.)
Let $\gg(S_0)=\gg(S),\gg(S_1),\ldots,\gg(S_m)=\gg(T)$ be a shortest path from $\gg(S)$ to $\gg(T)$ in $G$. Then, there are some  tokens $\tau_i$, $1\leq i\leq m$ such that
$S_i\tau_{i+1}=S_{i+1}$ for $0\leq i< m$. The message $\bm=\tau_1\ldots\tau_m$ produces the state $T$ from the state~$S$. An argument akin to that used  in the foregoing paragraph shows that $\bm$ is a concise message for $S$. \EOP
\wl
We now establish a result of the same vein for the regular returns of a medium (cf.~Definition \ref{regular}). 
\begin{definition}\label{def circuit even minimal}{\rm  We recall that a sequence  of vertices
$\bs_m= (v_i)_{0\leq i\leq m}$ such that $\{v_i,v_{i+1}\}$ are edges
in a graph is a circuit if  $v_m = v_0$ and all the vertices $v_1,\ldots,v_m$ are different. By abuse of language, we say that the edges 
 $\{v_i,v_{i+1}\}$, for $0\leq i \leq m-1$, 
{\sl belong} to the circuit $\bs_m$. The circuit $\bs_m$ is {\sl even} if it has an even number of edges: $m = 2n$; any two of its edges $\{v_i,v_{i+1}\}$ and $\{v_{i+n},v_{i+n+1}\}$, $0\leq i\leq n-1$ are then called {\sl opposite}. A circuit  is {\sl minimal} if at least one shortest path between any two of its vertices is a segment of the circuit. A graph is {\sl even} if all its circuits are even.}
\end{definition}

\begin{lemma}\label{regular<=> min even}{\sl  Let $\gg:\SSS\rightarrow V$ be the representation of a medium $\MMM= (\SSS,\TTT)$ by a graph $G=(V,E)$. If $\bm=\tau_1\ldots\tau_{2n}$ is a regular return for some state $S\in \SSS$, then the sequence of vertices $(\gg (S_i))_{0\leq i\leq 2n}$, where $S_i=S\tau_0\tau_1\cdots\tau_i$, for $0\leq i\leq 2n$, forms an even, minimal circuit of~$G$ (with $S = S_0=S_{2n}$). Conversely, if a sequence $(\gg(S_i))_{0\leq i\leq 2n}$ is an even minimal circuit of~$G$, then  $\bm=\tau_1\ldots\tau_m$ with 
$S\tau_0\tau_1\cdots\tau_i = S_i$, for $0\leq i \leq 2n$ is a regular circuit for $S$ in $\MMM$. 
}
\end{lemma}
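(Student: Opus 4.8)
The plan is to prove the two halves separately, each reducing through Lemma~\ref{so1 shortest path}---the correspondence between concise messages and shortest paths---with the structural facts about regular returns from Theorem~\ref{theo opposite} doing the book-keeping in the harder direction. Throughout I write $S_i=S\tau_0\tau_1\cdots\tau_i$, so $S_0=S$, and since $\gg$ is a bijection I pass freely between statements about the states $S_i$ and statements about the vertices $\gg(S_i)$, using (\ref{def graph rep}) to turn adjacency of states into edges and back.

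\emph{Necessity.} Assume $\bm=\tau_1\cdots\tau_{2n}$ is a regular return for $S$. Being a return, it is stepwise effective and ineffective for $S$, so $\gg(S_{2n})=\gg(S_0)$ and each $\{\gg(S_i),\gg(S_{i+1})\}$ is an edge of $G$. The key preliminary step is that \emph{every cyclic rotation of $\bm$ is again a regular return}: by Theorem~\ref{theo opposite} the opposite tokens of $\bm$ are mutual reverses---a property inherited by every rotation $\tau_i\cdots\tau_{2n}\cdots\tau_{i-1}$---and every such rotation is an orderly return for $S_{i-1}$, so Theorem~\ref{theo opposite} applied to the rotation upgrades it to a regular return. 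Consequently, for \emph{every} $i$ the length-$n$ window $\tau_i\cdots\tau_{i+n-1}$ (indices read cyclically) is concise for $S_{i-1}$, whence by Lemma~\ref{so1 shortest path} the $n+1$ vertices $\gg(S_{i-1}),\ldots,\gg(S_{i+n-1})$ form a shortest path in $G$. From this I read off the three required properties: (a) any two of the $2n$ vertices $\gg(S_0),\ldots,\gg(S_{2n-1})$ lie inside a common such window---their cyclic distance is at most $n$---and are therefore distinct, so the sequence is a circuit; (b) the circuit has $2n$ edges, hence is even; (c) for two of its vertices the shorter arc joining them, of length at most $n$, is a sub-path of one of these shortest-path windows, hence itself a shortest path, so the circuit is minimal.

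\emph{Sufficiency.} Let $(\gg(S_i))_{0\le i\le 2n}$ be an even minimal circuit. Consecutive vertices are joined by edges, so consecutive states are adjacent and Lemma~\ref{prep lem}(iii) furnishes a unique token $\tau_i$ with $S_{i-1}\tau_i=S_i$; set $\bm=\tau_1\cdots\tau_{2n}$. The $\gg(S_i)$ are pairwise distinct for $0\le i\le 2n-1$ and $\gg(S_{2n})=\gg(S_0)$, so $\bm$ is stepwise effective and ineffective for $S=S_0$, that is, a return. For regularity I apply one device twice: if two vertices of the circuit are separated by exactly $n$ edges on each side, then \emph{both} connecting arcs have length $n$, so minimality forces their graph-distance to be $n$ and \emph{both} arcs to be shortest paths. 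Applied to $\gg(S_0)$ and $\gg(S_n)$, this makes $\tau_1\cdots\tau_n$ and $\tau_{n+1}\cdots\tau_{2n}$ concise by Lemma~\ref{so1 shortest path}; with $\bq=\widetilde{\tau_{n+1}\cdots\tau_{2n}}$, again concise, $\bm=(\tau_1\cdots\tau_n)\widetilde{\bq}$ is a concise message followed by the reverse of one, both producing $S_n\neq S_0$ from $S_0$, so $\bm$ is an orderly return for $S_0$ by Definition~\ref{def orderly circuit}. Applied to $\gg(S_{i-1})$ and $\gg(S_{i+n-1})$ for $1\le i\le n$, the same device makes each window $\tau_i\cdots\tau_{i+n-1}$ concise for $S_{i-1}$. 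These are precisely the requirements of Definition~\ref{regular}, so $\bm$ is a regular return for $S$.

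The routine half is sufficiency; the real obstacle is necessity. There the trouble is that the definition of ``regular'' supplies conciseness of the windows $\tau_i\cdots\tau_{i+n-1}$ only for $1\le i\le n$, which neither separates the ``wrap-around'' vertex pairs such as $\gg(S_0)$ and $\gg(S_{2n-1})$ nor witnesses their short arc as a shortest path. Getting past this is exactly what forces the detour through Theorem~\ref{theo opposite}: from the regularity of $\bm$ that theorem yields both that the opposite tokens of $\bm$ are mutual reverses and that every cyclic rotation of $\bm$ is an orderly return, and these two facts imply (by the theorem once more, now applied to each rotation) that every cyclic rotation of $\bm$ is in fact a regular return---and it is this full cyclic family, one starting at each position of the circuit, that delivers conciseness of windows starting everywhere.
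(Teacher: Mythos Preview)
Your proof is correct in both directions. For necessity, you and the paper both reduce to Lemma~\ref{so1 shortest path}, but you take a somewhat different and more explicit route. The paper appeals directly to the definition of a regular return, which literally supplies conciseness only of the windows $\tau_i\cdots\tau_{i+n-1}$ for $1\le i\le n$, and then asserts minimality; the ``wrap-around'' pairs (those whose short circuit-arc passes through $S_0$) are not explicitly treated, though they can be handled by a short triangle-inequality argument against the antipodal distance $\dd(\gg(S_j),\gg(S_{j+n}))=n$. You instead invoke Theorem~\ref{theo opposite} to show that every cyclic rotation of $\bm$ is itself regular, yielding concise windows starting at \emph{every} position and making the argument for both distinctness and minimality uniform. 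What your route buys is completeness and transparency at the cost of one extra citation; what the paper's route buys is brevity at the cost of leaving a case to the reader. One small quibble: the specific pair $\gg(S_0),\gg(S_{2n-1})$ you cite in your final paragraph is not the right illustration of the difficulty---those two are adjacent and are already covered by the backward half $\tilde\tau_{2n}\ldots\tilde\tau_{n+1}$ that ``orderly'' supplies; a pair such as $\gg(S_1),\gg(S_{2n-1})$ genuinely exhibits the wrap-around issue. For sufficiency, the paper omits the argument entirely, so your proof there simply fills in what the paper leaves to the reader, and does so correctly: the key observation that for antipodal vertices \emph{both} circuit arcs are shortest paths (since one is by minimality, and the other has the same length) is exactly what is needed to feed Lemma~\ref{so1 shortest path} in the converse direction.
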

\tl

\begin{proof} In the notation of the lemma, let $\bm$ be a regular return for state $S$. Thus, by definition of a regular return (cf.~\ref{regular}), $\tau_1\ldots\tau_n$ and 
$\tilde\tau_{2n}\ldots\tilde \tau_{n+1}$
 are concise messages for $S$. By Lemma \ref{so1 shortest path}, the sequence of vertices $(\gg (S_i))_{0\leq i\leq n}$, where $S_i=S\tau_0\tau_1\cdots\tau_i$, for $0\leq i\leq n$, forms a shortest path joining $\gg(S)$ and $\gg(T)$, with $T = S\tau_1\cdots\tau_n$. Similarly, the sequence $\gg(S_{2n}),\gg(S_{2n-1}),\ldots,\gg(S_{n+1})$ is another shortest path joining $\gg(S)$ and $\gg(T)$. Since $\gg$ is a 1-1 function, all the vertices $\gg(S_i)$ are distinct, and so the sequence 
$(\gg (S_i))_{0\leq i\leq 2n}$ is an even circuit. This circuit is a minimal one. Indeed, by definition of a regular return, all the messages $\tau_i\tau_{i+1}\ldots\tau_{i+ n-1}$ are concise for $S\tau_1\cdots\tau_{i-1}$. So, by Lemma \ref{so1 shortest path}, all the sequences $\gg(S_i),\ldots,\gg(S_{i+n-1})$ are shortest paths between $\gg(S_i)$ and $\gg(S_{i+n-1})$, which implies that at least one shortest path between any two vertices of the circuit $(\gg (S_i))_{0\leq i\leq 2n}$ is a segment of that circuit. We omit the proof of the converse part of this lemma. The argument is based on the converse part of Lemma \ref{so1 shortest path} and is similar. 
\end{proof}

\begin{remark}\label{opposite--opposite}{\rm A close reading of this proof shows that opposite tokens $\tau_i$, $\tau_{i+n} =
\tilde\tau_i$ in a regular return correspond to opposite edges $\{\gg(S_i),\gg(S_{i+1})\}$, $\{\gg(S_{i+n},\gg(S_{i+1+n})\}$ in the even minimal circuit of the representing graph, with $S_{i+1}=S_i\tau_i$ and $S_{i+n}=S_{i+n+1}\tau_{i+n}$.}
\end{remark}

\section*{Media Inducing Graphs}

Our next task is to characterize the graphs representing media in terms of graph concepts. 
Some necessary conditions  are easily inferred from the axioms of a medium. For example, Axiom [Ma] forces the graph to be connected, and [Mb] demands that it is even. By convention, the graph should not have any loops. However, as shown by the two example below, these three conditions are not sufficient to characterize the graph of a medium.

\begin{twocounterexamples}\label{counter G1-3/4}{\rm The graphs corresponding to the digraphs A and B in Figure \ref{for[G3]} are connected and all their circuits are even. Moreover, they have no loops. Yet, neither A nor  B can yield the graph of a medium.  We leave to the reader to prove this for Figure \ref{for[G3]}A.
\tl

Here is why in the case of B. The circuit pictured in thick lines is even and minimal. By Lemma \ref{regular<=> min even}, it must represent a regular return in a medium. From Remark \ref{opposite--opposite}, we know that the same token must be matched to opposite edges of the circuit. Accordingly, the same token $\nu$ has been assigned to the arcs $JM$ and $RW$. (To simplify the figure, only one token from each pair of mutually reverse tokens is indicated.) The circuit  containing the six vertices $L,K,N,W,R$ and $H$ is also even and minimal. Thus, the arcs $LK$ and $RW$ must be assigned the same token, and since $RW$ has been assigned token $\nu$, that token must also be assigned to $LK$. The argument governing the placement of the token $\tau$ are similar. The consequence, however, is that there is no concise message from $L$ to $J$: any message producing $J$ from $L$ contains either both $\nu$ and $\tilde\nu$, or both $\tilde\tau$ and $\tau$. This example will be crucial in our understanding of the appropriate axiomatization of a graph capable of representing a medium.

{\begin{figure}[h]
\centerline{\includegraphics*[scale=0.5]{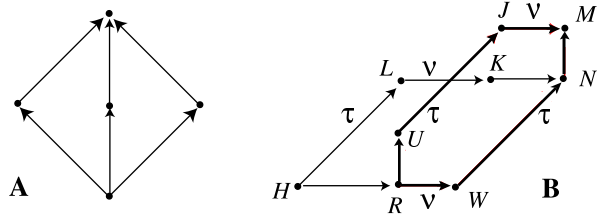}}
\caption[for[G3]{\label{for[G3]} {\sc Neither of these graphs is that of a medium. The   token system corresponding to Digraph $\bold B$ contradicts {\rm [Ma]}. Which of the properties of a medium is contradicted by Digraph $\bold A$?}}
\end{figure}
}
}
\end{twocounterexamples}

In our failed attempt at representing a medium in Figure \ref{for[G3]}, we have chosen to picture the arcs  representing the same token by parallel segments (forming two sides of an implicit rectangle). The intuition that the opposite arcs of even minimal circuits should be parallel is a sound one, and suggests the construction of an equivalence relation on the set of set of arcs of the digraph. Such a construction is delicate, however, and the two examples of media pictured below by their digraphs must  be taken into  account.

\begin{examples}{\rm Together with the examples of Figure \ref{for[G3]}, Examples A and B in Figure \ref{remarks-para} will also guide and illustrate our choice of concepts and axioms. 
{\begin{figure}[h]
\centerline{\includegraphics*[scale=0.42]{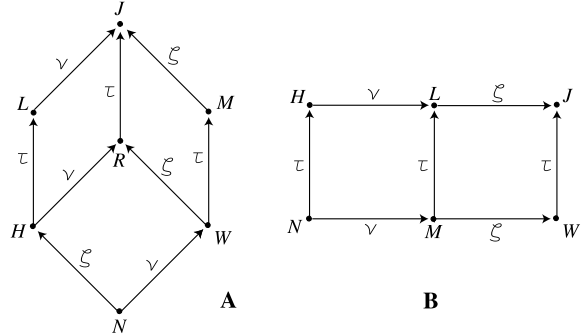}}
\caption[remarks-para]{\label{remarks-para} {\sc Two examples of graphs of media.
In Example B, notice that different tokens are assigned to the opposite arcs $HL$ and $MW$. This circuit is not minimal. Compare with the situation of the arcs $LJ$ and $NW$ in Example~A.  }}
\end{figure}
}
}
\end{examples}

\begin{definition}\label{def mho}{\rm  We write $\vec{E} = \{ST\st \{S,T\}\in E\}$ for the set of all the arcs of a graph $G=(V,E)$. The {\sl like} relation of the graph $G$ is a relation $\bmfL$ on $\vec E$ defined by
\begin{equation*}
ST\bmfL PQ\,\EQ\,(\dd(S,P)+1 =\dd(T,Q)+ 1= \dd(S,Q) = \dd(T,P))\quad\qquad(\{S,T\},\,\{P,Q\}\in E),
\end{equation*}
where $\dd$ denotes the graph theoretical distance between the vertices of the graph. In 
Example B of Figure~\ref{remarks-para},
 we have $NH\bmfL WJ$ because
$$
\dd(H,J)+1 =\dd(N,W)+ 1= \dd(H,W) = \dd(N,J),
$$ 
but $HL\bmfL MW$ does not hold since
$$
\dd(H,M) = \dd(L,W) = 2\quad\text{and}\quad 4=\dd(H,W) \neq \dd(L,M) = 1.
$$
The point is that the arcs $HL$ and $MW$ are opposite in the circuit  $H,L,J,W,M,N,H$, but this circuit is not minimal.
\tl

The like relation is clearly reflexive and symmetric; and moreover
\begin{equation}\label{HN = PQ <=> NH = QP}
ST\bmfL PQ\quad\EQ\quad TS\bmfL QP\quad\qquad(\{S,T\},\,\{P,Q\}\in E).
\end{equation}
Two binary relations on the set of edges of a graph play a central role in characterizing partial cubes. They are Djokovi\'{c}'s relation~$\theta$~\citep{djoko73} and Winkler's relation $\Theta$~\citep{winkl84}. These relations are germane to, but different from the like relation of this paper. Space limitation prevents us to specify the relationship here.
}
\end{definition}
We now come to the main concept of this paper. We recall that a graph is bipartite if and only if it is even \citep[][]{konig16}.

\begin{definition}\label{def mediatic}{\rm
Let $G=(V,E)$ be a graph equipped with its like relation~$\bmfL$. The graph $G$ is called {\sl mediatic} if the following three axioms hold.
\begin{roster}
\item[{[G1]}] $G$ is connected.
\item[{[G2]}] $G$ is bipartite.
\item[{[G3]}] $\bmfL$ is transitive.
\end{roster}
The set of vertices is not assumed to be finite. It is easily verified that any graph isomorphic to a mediatic graph is mediatic. 
}
\end{definition}

Axiom [G3] eliminates the counterexample of Figure \ref{for[G3]}B.  Indeed, since
\begin{gather*}
\dd(L,J) = 4,\quad \dd(K,M) = 2,\quad \dd(L,M) = 3= \dd(K,J)\\
\noalign{we have}
LK\bmfL RW\bmfL JM\quad\text{but not}\quad 
LK\bmfL JM.
\end{gather*}

The following result is immediate.
\begin{lemma}\label{trect equiv rel}{\sl The like relation $\bmfL$ of a mediatic graph $(V,E)$ is an equivalence relation on~$\vec E$.}
\end{lemma}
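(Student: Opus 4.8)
The plan is to show that the like relation $\bmfL$ of a mediatic graph is an equivalence relation on $\vec E$ by verifying reflexivity, symmetry, and transitivity. Transitivity is immediate: it is exactly Axiom [G3] in Definition \ref{def mediatic}. Reflexivity and symmetry were already noted right after the definition of $\bmfL$ (Definition \ref{def mho}): for any arc $ST$ we have $\dd(S,S)+1 = \dd(T,T)+1 = \dd(S,T) = \dd(T,S) = 1$, using that $\{S,T\}\in E$ forces $\dd(S,T)=1$; hence $ST\bmfL ST$. Symmetry is clear from the defining condition, since interchanging the roles of the pair $\{S,T\}$ with the pair $\{P,Q\}$ leaves the four equalities $\dd(S,P)+1 = \dd(T,Q)+1 = \dd(S,Q) = \dd(T,P)$ invariant (they simply get relabelled). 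So the only content to record is the bookkeeping.

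The one subtlety worth mentioning is that $\bmfL$ is a relation on arcs, not on edges, so one must be slightly careful about orientation; but equation (\ref{HN = PQ <=> NH = QP}) of Definition \ref{def mho} already handles the interaction between $\bmfL$ and reversal of arcs, and nothing further is needed for this lemma. I expect no obstacle at all here: the lemma is labelled "immediate" precisely because reflexivity and symmetry are built into the shape of the defining formula and transitivity is postulated. The proof is therefore a one- or two-sentence assembly of facts already in hand.

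\begin{proof}
By Definition \ref{def mho}, the like relation $\bmfL$ is reflexive and symmetric. Indeed, for any arc $ST\in\vec E$ we have $\{S,T\}\in E$, hence $\dd(S,T)=\dd(T,S)=1$ and $\dd(S,S)=\dd(T,T)=0$, so that $\dd(S,S)+1 = \dd(T,T)+1 = \dd(S,T) = \dd(T,S)$, which is exactly the statement $ST\bmfL ST$; and the defining condition of $ST\bmfL PQ$ is visibly unchanged under exchanging the (ordered) pairs $(S,T)$ and $(P,Q)$, so $ST\bmfL PQ$ implies $PQ\bmfL ST$. Finally, by Axiom [G3] in Definition \ref{def mediatic}, $\bmfL$ is transitive. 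Therefore $\bmfL$ is an equivalence relation on $\vec E$.
\end{proof}
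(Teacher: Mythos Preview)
Your proof is correct and follows exactly the approach the paper intends: the paper simply declares the result ``immediate,'' having already noted in Definition \ref{def mho} that $\bmfL$ is reflexive and symmetric, with transitivity supplied by Axiom [G3].
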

\begin{definition}\label{rect equiv classes}{\rm We denote by 
$$
\langle ST\rangle =\{PQ\in\vec{E}\st ST\bmfL PQ\}
$$ 
 the equivalence class containing the arc $ST$ in the partition of $\vec E$ induced by $\bmfL$.}
\end{definition} 

We will show that a graph representing a medium is mediatic (see Theorem  \ref{media graph => mediatic}). Our next lemma is the first step. 

\begin{lemma}\label{lem g mediatic}{\sl Let $\gg$ be the representation of a medium $\MMM = (\SSS,\TTT)$  by a graph $G=(\SSS,E)$ which is equipped with its like relation $\bmfL$. 
Suppose that $\gg(N)\gg(S)\bmfL \gg(W)\gg(Q)$. Then $N\tau= S$ and $W\tau = Q$ for some $\tau\in\TTT$. In fact, there exists an orderly circuit $\bq\tilde\tau\widetilde\bw\tau$ for $S$ in $\MMM$, with $S\bq\tilde\tau=S\tilde\tau\bw=W$; thus $\bq$ and $\bw$ are concise with $\ell (\bq) = \ell(\bm)$. Such a circuit is not necessarily regular.
}
\end{lemma}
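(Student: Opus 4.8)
The plan is to read the hypotheses of Theorem~\ref{theorem theta} off the relation $\gg(N)\gg(S)\bmfL\gg(W)\gg(Q)$ and then let that theorem carry the argument. Since $\gg$ is the identity here, the hypothesis says that $\{N,S\}$ and $\{W,Q\}$ are edges and that
$$
\dd(N,W)+1=\dd(S,Q)+1=\dd(N,Q)=\dd(S,W);
$$
by the definition of the graph of a medium the two edges furnish tokens $\tau,\mu$ with $N\tau=S$ and $W\mu=Q$, each unique by Lemma~\ref{prep lem}(iii), and the crux is to show $\mu=\tau$. First I would reduce to the case in which $S,N,Q,W$ are four distinct states: a one-line inspection of the displayed equalities shows that if two of them coincide then necessarily $N=W$ and $S=Q$, and then $N\tau=S=N\mu$ gives $\mu=\tau$ by Lemma~\ref{prep lem}(iii), the required orderly circuit being the vacuous return $\tilde\tau\tau$ (take $\bq,\bw$ empty), with $S\tilde\tau=N=W$.

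So assume $S,N,Q,W$ are distinct. By Axiom~[Ma] choose concise messages $\bq$ from $S$ to $Q$, $\bq'$ from $N$ to $Q$, $\bw$ from $N$ to $W$, and $\bw'$ from $S$ to $W$; together with $\tau,\mu$ these are exactly the data of~(\ref{hypo lem theta}). By Lemma~\ref{so1 shortest path} each of these concise messages is a shortest path in $G$, so $\ell(\bq)=\dd(S,Q)$, $\ell(\bq')=\dd(N,Q)$, $\ell(\bw)=\dd(N,W)$ and $\ell(\bw')=\dd(S,W)$. Writing $d=\dd(S,Q)$, the displayed equalities give $\dd(N,W)=d$ and $\dd(N,Q)=\dd(S,W)=d+1$, so that
$$
\ell(\bq)+\ell(\bw)+2=2d+2=(d+1)+(d+1)=\ell(\bq')+\ell(\bw'),
$$
which is condition~(iv) of Theorem~\ref{theorem theta}.

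Theorem~\ref{theorem theta} now applies: condition~(iv) implies condition~(ii), that is $\tau=\mu$, and in particular $W\tau=Q$, which is the first assertion of the lemma. Its concluding clause then gives that $\bq\tilde\mu\widetilde\bw\tau=\bq\tilde\tau\widetilde\bw\tau$ is an orderly circuit for $S$ with $S\bq\tilde\tau=S\tilde\tau\bw=W$; the messages $\bq$ and $\bw$ are concise by the way they were chosen, and condition~(iii) (or simply the equality $\ell(\bq)=\dd(S,Q)=\dd(N,W)=\ell(\bw)$) gives $\ell(\bq)=\ell(\bw)$, establishing the second assertion. For the final sentence it suffices to display one small mediatic graph and one like-related pair of arcs for which the circuit just produced is not regular; such an example exists because, although $\CCC(\bq)=\CCC(\bw)$ is forced, the common tokens need not occur in $\bq$ in the reverse of their order in $\bw$, so the opposite tokens of the circuit need not be mutual reverses and the conciseness requirement of Definition~\ref{regular} can fail.

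The argument is essentially bookkeeping on top of Theorem~\ref{theorem theta}; the one point that genuinely needs care is translating the like relation into precisely condition~(iv)---which rests on Lemma~\ref{so1 shortest path} to equate the length of a concise message with a graph distance---and remembering that Theorem~\ref{theorem theta} is stated for four distinct states, so the degenerate configuration $N=W$, $S=Q$ must be split off at the start. Producing the counterexample for the last clause is routine but calls for an explicit picture.
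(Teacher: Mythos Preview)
Your proposal is correct and follows essentially the same route as the paper: translate $\gg(N)\gg(S)\bmfL\gg(W)\gg(Q)$ into the distance equalities, invoke Lemma~\ref{so1 shortest path} to equate lengths of concise messages with graph distances, read off condition~(iv) of Theorem~\ref{theorem theta}, and let that theorem deliver $\tau=\mu$ together with the orderly circuit. Two small differences: you split off the degenerate case $N=W$, $S=Q$ (which the paper silently passes over, even though Theorem~\ref{theorem theta} is stated for four distinct states), and you obtain $\bq',\bw'$ via Axiom~[Ma] rather than building them explicitly as $\tau\bq$ and $\tilde\tau\bw$; for the final clause the paper supplies a concrete figure (the circuit $\nu\zeta\tilde\tau\tilde\zeta\tilde\nu\tau$) whereas you only argue that such an example must exist, so you would still owe the explicit picture.
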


{\sc Proof.} We abbreviate our notation for this proof, and write $S^{\gg} = \gg(S)$ for all $S\in\SSS$.
By definition, $N^{\gg}S^{\gg}\bmfL W^{\gg}Q^{\gg}$ implies that $\dd(S^{\gg},Q^{\gg}) = \dd(N^{\gg},W^{\gg}) = \dd(N^{\gg},Q^{\gg}) -1 = \dd(S^{\gg},W^{\gg}) -1$; so, there are, for some $n\in\Na$, two shortest paths
$$
S^{\gg}_0=S^{\gg}, S^{\gg}_1,\ldots,S^{\gg}_n = Q^{\gg}\quad\text{and}\quad N^{\gg}_0 = N^{\gg}, N^{\gg}_1,\ldots, N^{\gg}_n = W^{\gg}
$$
between $S^{\gg}$ and $Q^{\gg}$, and $N^{\gg}$ and $W^{\gg}$, respectively. Moreover, 
$$
S^{\gg}_0=S^{\gg}\hspace{-1pt}, S^{\gg}_1\hspace{-1pt},\ldots\hspace{-1pt},S^{\gg}_n = Q^{\gg}\hspace{-1pt},W^{\gg}\,\text{ and }\, N^{\gg}_0 = N^{\gg}\hspace{-1pt}, N^{\gg}_1\hspace{-1pt},\ldots\hspace{-1pt}, N^{\gg}_n = W^{\gg}\hspace{-1pt},Q^{\gg}
$$
are also shortest paths.
Using Lemma 
\ref{so1 shortest path}, we can assert the existence of two concise messages $\bq$ and $\bw$ such that $S\bq= Q$ and $N\bw= W$, with $\ell (\bq) = \ell(\bw) = n$. Also, for some tokens $\tau$ and $\mu$, we have $N\tau = S$ and $W\mu = Q$ with $\bq'=\tau\bq$ and $\bw'=\tilde\tau\bw$ concise for $N$ and $S$, respectively, and
$\ell (\bq') = \ell(\bw') = n+1$. We are exactly in the situation of Theorem \ref{theorem theta} (see Figure \ref {newproof}). Using the implication (iv) $\Rightarrow$ (ii) of this theorem,  we obtain $\tau = \mu$. Condition (iv) also implies that $\bq\tilde\tau\widetilde\bw\tau$ is an orderly circuit for $S$, with $S\bq\tilde\tau=S\tilde\tau\bw=W$. The example of Figure \ref{not_regular} shows that, with $\bq =\bw= \nu\zeta$,  such a circuit need not be regular.
\EOP

{\begin{figure}[h]
\centerline{\includegraphics*[scale=0.53]{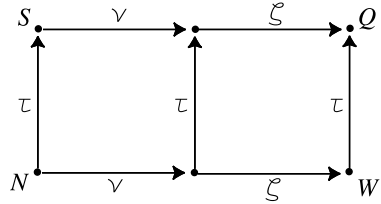}}
\caption[not_regular]{\label{not_regular} {\sc Under the hypotheses of Lemma \ref{lem g mediatic}, 
with $NS\bmfL WQ$, the orderly circuit $\bq\tilde\tau\tilde\bw\tau= \nu\zeta\tilde\tau\tilde\nu\tilde\zeta\tau$ for $S$ is not regular. For example, $\zeta\tilde\tau\tilde\zeta$ is not concise for $S\nu$ (cf.~Definition \ref{regular}).}}
\end{figure}
}

\begin{convention}\label{med g equipped}{\rm Any graph representing a medium comes implicitly equipped with its like relation $\bmfL$. When several such graphs are considered (say, for different media), their respective like relations are distinguished by diacritics, such as $\bmfL'$ or~$\bmfL^*$.}
\end{convention} 

\begin{theorem}\label{media graph => mediatic}{\sl Any graph  representing a medium  is mediatic.}
\end{theorem}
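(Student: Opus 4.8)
The plan is to verify the three axioms [G1], [G2], [G3] for the graph $G = (\SSS, E)$ of a medium $\MMM = (\SSS,\TTT)$, each of which follows fairly directly from the medium axioms [Ma], [Mb] together with the structural results already established. For [G1], I would use Axiom [Ma]: given any two states $S, V$, there is a concise message producing $V$ from $S$; by Lemma \ref{so1 shortest path} this message yields a path in $G$ from $S$ to $V$, so $G$ is connected. For [G2], since a graph is bipartite if and only if it is even (K\"onig's theorem, recalled before Definition \ref{def mediatic}), it suffices to show $G$ is even. Any circuit of $G$ corresponds, again via Lemma \ref{so1 shortest path} applied edge by edge, to a stepwise effective, ineffective message for some state $S$ --- that is, a return for $S$ --- and by Axiom [Mb] every return is vacuous, hence of even length. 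Therefore every circuit of $G$ has an even number of edges and $G$ is even, so bipartite.

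The substantive part is [G3]: the transitivity of the like relation $\bmfL$. Suppose $\gg(A)\gg(B) \bmfL \gg(C)\gg(D)$ and $\gg(C)\gg(D) \bmfL \gg(E)\gg(F)$; I must show $\gg(A)\gg(B) \bmfL \gg(E)\gg(F)$. The key tool is Lemma \ref{lem g mediatic}: from $\gg(A)\gg(B)\bmfL\gg(C)\gg(D)$ we extract a single token $\tau$ with $A\tau = B$ and $C\tau = D$; from $\gg(C)\gg(D)\bmfL\gg(E)\gg(F)$ we likewise extract a single token $\sigma$ with $C\sigma = D$ and $E\sigma = F$. By Lemma \ref{prep lem}(iii), the token producing $D$ from $C$ is unique, so $\tau = \sigma$, and hence $A\tau = B$, $E\tau = F$ with the \emph{same} token $\tau$ governing all three arcs. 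It then remains to verify the distance equalities defining $\gg(A)\gg(B)\bmfL\gg(E)\gg(F)$, namely $\dd(A,E)+1 = \dd(B,F)+1 = \dd(A,F) = \dd(B,E)$ (distances taken in $G$, identifying states with their images under $\gg$). This I would do by translating distances into lengths of concise messages via Lemma \ref{so1 shortest path} and using the content-of-a-state machinery: a concise message from $A$ to $F$ has length $|\widehat F \bigtriangleup \widehat A|/2$ by Theorems \ref{V minus S = CV minus CS} and \ref{|S| = |V|}, and since $A\tau = B$ gives $\widehat B = \widehat A \bigtriangleup \{\tau,\tilde\tau\}$ (with $\tau \notin \widehat A$, $\tau \in \widehat B$) and similarly $\widehat F = \widehat E \bigtriangleup\{\tau,\tilde\tau\}$ with $\tau \notin \widehat E$, $\tau \in \widehat F$, a short symmetric-difference computation shows $\dd(A,F) = \dd(B,E)$ and $\dd(A,E)+1 = \dd(A,F)$, etc. One must check that $\tau \notin \widehat A$ and $\tau\notin\widehat E$ (equivalently, that $\tau$ ``points away'' from $A$ and from $E$ consistently), which is exactly where the orderly-circuit conclusion of Lemma \ref{lem g mediatic} --- that $\tau$ sits at opposite places --- is used, or alternatively Theorem \ref{|S| = |V|} combined with the direction in which $\tau$ acts.

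The main obstacle I anticipate is bookkeeping the orientations consistently in the proof of [G3]: the like relation is defined on arcs (ordered pairs), and one has to be careful that the single common token $\tau$ extracted from the two $\bmfL$-relations is oriented the same way relative to all four of $A,B,E,F$, so that the four distance equalities come out with the correct ``$+1$'' placements rather than, say, $\dd(A,F) = \dd(B,E)$ but $\dd(A,E) = \dd(B,F)$ (which would be the ``reversed'' configuration). Property (\ref{HN = PQ <=> NH = QP}) and the fact (Theorem \ref{|S| = |V|}) that exactly one of $\tau, \tilde\tau$ lies in each state's content pin down the orientation, so the argument goes through, but it requires care. Once [G1], [G2], [G3] are all checked, $G$ is mediatic by Definition \ref{def mediatic}, completing the proof.
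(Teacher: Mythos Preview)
Your proof is correct and matches the paper for [G1], [G2], and the opening move of [G3] (two applications of Lemma~\ref{lem g mediatic} plus Lemma~\ref{prep lem}(iii) to extract a single common token $\tau$). Where you diverge is in the verification of the four distance equalities: the paper, having obtained $\tau=\mu$, places itself in the setting of Theorem~\ref{theorem theta} (with $N=A$, $S=B$, $W=E$, $Q=F$ in your notation) and invokes the implication (ii)$\Rightarrow$(iii),(iv) of that theorem to read off $\dd(B,F)+1=\dd(A,E)+1=\dd(B,E)=\dd(A,F)$ directly. You instead compute the distances via token contents and symmetric differences, using $\dd(X,Y)=|\widehat X\bigtriangleup\widehat Y|/2$ together with $\widehat B=\widehat A\bigtriangleup\{\tau,\tilde\tau\}$ and $\widehat F=\widehat E\bigtriangleup\{\tau,\tilde\tau\}$. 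Your orientation worry is easily settled: $A\tau=B$ with $A\neq B$ makes $\tau$ a one-token concise message producing $B$, so Theorem~\ref{V minus S = CV minus CS} gives $\tau\in\widehat B\setminus\widehat A$ (and likewise $\tau\in\widehat F\setminus\widehat E$), and the symmetric-difference arithmetic then goes through cleanly. Your route is arguably more elementary in that it bypasses the rather involved case analysis inside Theorem~\ref{theorem theta}; the paper's route, on the other hand, amortises that analysis, since Theorem~\ref{theorem theta} is used repeatedly elsewhere (e.g.\ in Theorem~\ref{theo opposite} and Lemma~\ref{lem g mediatic}).
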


{\sc Proof.}   Because any graph isomorphic to a mediatic graph is mediatic, we can invoke Lemma 
\ref{G iso G rep M} and content ourselves with proving that  the graph of a medium  is mediatic (which simplifies our notation). Denote the medium by $\MMM=(\SSS,\TTT)$, and let $G=(\SSS,E)$ be its graph. We prove that $G$ satisfies [G1], [G2] and [G3].
\begin{roster}
\item[{[G1]}] Axiom [Ma] requires that $G$ be connected.
\item[{[G2]}]  Axiom [Mb] implies that $G$ must be even. Hence, by K\"{o}nig's Theorem, it must be bipartite. 
\item[{[G3]}] Suppose that $NS\bmfL PR\bmfL WQ$. By Lemma \ref{lem g mediatic} (applied twice), there must be some tokens $\tau$ and $\mu$ such that $N\tau = S$, $P\tau = R$, $P\mu = R$ and $W\mu = Q$, so $\tau = \mu$. Let then $\bq$ and $\bw'$ be two concise messages from $S$, and let $\bw$ and $bq'$ be two concise messages from $N$, such that
$$
S\bq = Q,\quad S\bw' = W, \quad N\bw = W,\quad N\bq' = Q.
$$
The situation is exactly as in Theorem \ref{theorem theta}, with the same notation. 
Because $\tau = \mu$, Condition (ii) of this theorem holds. We conclude that Conditions (iii) and (iv) also hold, which leads to 
$$
\dd(S,Q) + 1=\dd(N,W) + 1 = \dd(S,W) = \dd(N,Q).
$$
We have thus $NS\bmfL WQ$; so Axiom [G3] holds. \EOP
\end{roster}

We omit the proof of the next lemma, which is straightforward.

\begin{lemma}\label{isobmfL <=> isograph}{Let $G=(V,E)$ and $G'=(V',E')$ be two mediatic graphs, with their respective like relations $\bmfL$ and $\bmfL'$, and let $\varphi$ be a bijection of  $V$ onto $V'$. 
Then $\varphi$ is an isomorphism of $G$ onto $G'$ if and only if
$$
ST\bmfL PQ\quad\EQ\quad\varphi(S)\varphi(T)\bmfL'\varphi(P)\varphi(Q)\quad\quad(S,T,P,Q\in V).
$$
}
\end{lemma}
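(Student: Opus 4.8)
The plan is to prove the two implications separately, the only substantive input being the elementary fact that a graph isomorphism $\varphi$ is an \emph{isometry}. Indeed, $\varphi$ carries paths of $G$ bijectively to paths of $G'$ of the same length, hence shortest paths to shortest paths, and applying the same observation to $\varphi^{-1}$ gives $\dd\bigl(\varphi(u),\varphi(v)\bigr)=\dd(u,v)$ for all $u,v\in V$ (all distances being finite here by [G1], though that plays no real role). Once this is in hand, everything reduces to unpacking Definition \ref{def mho}: the assertion $ST\bmfL PQ$ is just the conjunction of ``$\{S,T\},\{P,Q\}\in E$'' with ``$\dd(S,P)+1=\dd(T,Q)+1=\dd(S,Q)=\dd(T,P)$''.

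For the forward implication I would assume $\varphi$ is an isomorphism of $G$ onto $G'$. Then $\{S,T\}\in E\EQ\{\varphi(S),\varphi(T)\}\in E'$ and similarly for $\{P,Q\}$, while the isometry property lets me replace each of $\dd(S,P),\dd(T,Q),\dd(S,Q),\dd(T,P)$ by the corresponding distance between $\varphi$-images in $G'$. Substituting into the displayed defining condition of $\bmfL$ immediately yields $ST\bmfL PQ\EQ\varphi(S)\varphi(T)\bmfL'\varphi(P)\varphi(Q)$ for all $S,T,P,Q\in V$.

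For the converse I would assume the displayed equivalence for all quadruples of vertices and specialize it to $P=S$, $Q=T$. Because $\bmfL$ is a relation on $\vec E$ and is reflexive (Definition \ref{def mho}), one has $ST\bmfL ST$ precisely when $ST\in\vec E$, i.e.\ precisely when $\{S,T\}\in E$; likewise for $\bmfL'$ in $G'$. Thus the specialized equivalence reads $\{S,T\}\in E\EQ\{\varphi(S),\varphi(T)\}\in E'$, so the bijection $\varphi$ is an isomorphism. I do not expect a genuine obstacle here; the only points requiring a little care are making the isometry argument symmetric by invoking $\varphi^{-1}$, and the observation driving the converse that the reflexive pairs of the like relation encode exactly the adjacency relation.
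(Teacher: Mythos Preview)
Your argument is correct and is precisely the direct verification the authors have in mind: the paper explicitly omits the proof of this lemma as ``straightforward.'' The forward direction is the observation that a graph isomorphism is an isometry, so both the edge membership and the distance equalities in Definition~\ref{def mho} transfer verbatim; the converse, via the specialization $P=S$, $Q=T$ together with the fact that $ST\bmfL ST$ holds exactly when $ST\in\vec E$, cleanly recovers the adjacency relation.
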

\begin{remark} {\rm The like relation is the fundamental tool for the study of mediatic graphs. We shall see that any mediatic graph $G$ can be used to construct a medium $\MMM$ that has $G$ as its graph. Each of the equivalence classes $\langle ST\rangle$ of the like relation contains `parallel' arcs of  the graph, and will turn out to correspond to a particular token, say $\tau$, of the medium under construction, with the class $\langle TS\rangle$ corresponding to the reverse token~$\tilde\tau$. Before proceeding to such a construction, we establish in Theorem \ref{iso media => iso graphs} a useful result which precisely links the isomorphism of media to that of their graphs. 
}
\end{remark}
\begin{definition} Two media $(\SSS,\TTT)$ and $(\SSS',\TTT')$ are isomorphic if there exists a pair $(\aa,\bb)$ of bijections $\aa:\SSS\to\SSS'$ and $\bb: \TTT\to \TTT'$ such that 
\begin{equation}\label{iso-media-eq}
S\tau = V\EQ \aa(S)\bb(\tau) = \aa(V)\quad\qquad(S,V\in \SSS,\,\tau\in\TTT).
\end{equation}
\end{definition}
\vspace*{-.5cm}
\section*{Paired Isomorphisms of Media and Graphs}

Isomorphic media  yield isomorphic mediatic graphs, and vice versa.

\begin{theorem}\label{iso media => iso graphs} {\sl Suppose that $\MMM=(\SSS,\TTT)$ and $\MMM'=(\SSS',\TTT')$ are two media and let $G=(\SSS,E)$ and $G'=(\SSS',E')$ be their respective graphs. Then $\MMM$ and $\MMM'$ are isomorphic if and only if $G$ and $G'$  are isomorphic; more precisely:
\tl

{\rm (i)} if $(\aa,\bb)$ is an isomorphism of $\MMM$ onto $\MMM'$, then $\aa:\SSS\to\SSS'$ is an isomorphism of $G$ onto $G'$ in the sense of {\rm (\ref{iso graph})};
\tl

{\rm (ii)} if $\varphi: \SSS\to\SSS'$ is an isomorphism of $G$ onto $G'$  in the sense of {\rm (\ref{iso graph})}, then there exists a bijection $\bb:\TTT\to\TTT'$ such that $(\varphi,\bb)$ is an isomorphism of $\MMM$ onto~$\MMM'$.
}
\end{theorem}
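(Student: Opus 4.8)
The plan is to prove (i) by directly unravelling the two definitions, and (ii) by routing the construction of $\bb$ through the like relation, using the fact---established below as a lemma---that in the graph of a medium the tokens correspond exactly to the equivalence classes of $\bmfL$.

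For (i): let $(\aa,\bb)$ be an isomorphism of $\MMM$ onto $\MMM'$. To verify that $\aa$ satisfies (\ref{iso graph}), suppose first $\{S,T\}\in E$; by the definition of the graph of a medium, $S\tau = T$ for some $\tau\in\TTT$, so (\ref{iso-media-eq}) gives $\aa(S)\bb(\tau) = \aa(T)$, and since $\aa$ is injective and $S\neq T$ we have $\aa(S)\neq\aa(T)$, whence $\{\aa(S),\aa(T)\}\in E'$. Conversely, if $\{\aa(S),\aa(T)\}\in E'$ then $\aa(S)\tau' = \aa(T)$ for some $\tau'\in\TTT'$; writing $\tau' = \bb(\tau)$ (possible since $\bb$ is onto) and reading (\ref{iso-media-eq}) from right to left yields $S\tau = T$, so $\{S,T\}\in E$. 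Hence $\aa$ is a graph isomorphism.

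For (ii), the key preliminary step is the following lemma, which I would prove from Theorems \ref{V minus S = CV minus CS} and \ref{|S| = |V|}, Lemma \ref{so1 shortest path} and Lemma \ref{lem g mediatic}: \emph{in the graph $G$ of a medium $\MMM$, for each $\tau\in\TTT$ the arc set $A_\tau = \{ST\in\vec E\st S\tau = T\}$ is nonempty and coincides with exactly one equivalence class of $\bmfL$; consequently $\tau\mapsto A_\tau$ is a bijection from $\TTT$ onto the set of $\bmfL$-classes of $G$.} Nonemptiness holds because $\tau\neq\tau_0$. The inclusion $A_\tau\subseteq\langle ST\rangle$ for any $ST\in A_\tau$ is the computational core: from Theorem \ref{V minus S = CV minus CS} and Lemma \ref{so1 shortest path} one has $\dd(X,Y) = \tfrac{1}{2}|\widehat Y\bigtriangleup\widehat X|$ for all states $X,Y$, and for an arc $S\tau = T$ one has $\widehat T\bigtriangleup\widehat S = \{\tau,\tilde\tau\}$ with exactly one of $\tau,\tilde\tau$ in each content by Theorem \ref{|S| = |V|}; applying this to $S\tau = T$ and $P\tau = Q$ and carrying out the routine case analysis of how $\{\tau,\tilde\tau\}$ meets $\widehat S,\widehat T,\widehat P,\widehat Q$ gives $\dd(S,P) = \dd(T,Q)$ and $\dd(S,Q) = \dd(T,P) = \dd(S,P)+1$, i.e. $ST\bmfL PQ$. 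For the reverse inclusion, Lemma \ref{lem g mediatic} shows that any two $\bmfL$-related arcs are realized by a common token, which is unique by Lemma \ref{prep lem}(iii); so a class meeting $A_\tau$ is contained in $A_\tau$, and therefore $A_\tau$ is precisely one class (using that $\bmfL$ is an equivalence relation, by Theorem \ref{media graph => mediatic} and Lemma \ref{trect equiv rel}). Bijectivity of $\tau\mapsto A_\tau$ is then immediate: a token is determined as a function by the arcs it realizes (a state it fixes has no outgoing arc), and every arc $ST$ belongs to $A_\tau$ for the unique $\tau$ with $S\tau = T$.

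Granting the lemma, I would finish (ii) as follows. By Theorem \ref{media graph => mediatic} both $G$ and $G'$ are mediatic; by Lemma \ref{isobmfL <=> isograph} the isomorphism $\varphi$ and its inverse preserve $\bmfL$ and $\bmfL'$, so the arc-bijection induced by $\varphi$ carries $\bmfL$-classes of $G$ onto $\bmfL'$-classes of $G'$. Composing this with the two token-to-class bijections furnished by the lemma (for $\MMM$ and for $\MMM'$), I define $\bb(\tau)$ to be the unique token of $\MMM'$ with arc set $\varphi(A_\tau) := \{\varphi(S)\varphi(T)\st ST\in A_\tau\}$; then $\bb:\TTT\to\TTT'$ is a bijection, being a composite of bijections. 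To check (\ref{iso-media-eq}): for $S\neq V$, $S\tau = V$ iff $SV\in A_\tau$ iff $\varphi(S)\varphi(V)\in\varphi(A_\tau) = A_{\bb(\tau)}$ iff $\varphi(S)\bb(\tau) = \varphi(V)$, using injectivity of $\varphi$; and for $S = V$, if $\varphi(S)\bb(\tau)\neq\varphi(S)$ then that arc lies in $A_{\bb(\tau)} = \varphi(A_\tau)$, forcing an arc of $A_\tau$ out of $S$, contrary to $S\tau = S$---and the converse is symmetric. Hence $(\varphi,\bb)$ is an isomorphism of $\MMM$ onto $\MMM'$, and combined with (i) this proves the equivalence. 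The one genuinely nontrivial point is the inclusion $A_\tau\subseteq\langle ST\rangle$ in the lemma, i.e. the distance bookkeeping through the content family; once that and Lemma \ref{lem g mediatic} are in place, the rest is formal.
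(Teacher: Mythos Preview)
Your proof is correct and reaches the same conclusion, but the organization and one key computation differ from the paper's.

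For part (i) you and the paper do the same thing. For part (ii) the paper defines $\bb$ directly by the formula $\bb(\tau)=\tau'\iff(\forall S,T)(S\tau=T\Leftrightarrow\varphi(S)\tau'=\varphi(T))$, then checks well-definedness, independence of the chosen arc, and bijectivity one at a time. The independence step---that $S\tau=T$ and $P\tau=Q$ force $ST\bmfL PQ$---is obtained in the paper by invoking Theorem~\ref{theorem theta} (specifically (ii)$\Rightarrow$(iii)) after noting via Condition~[M4] that $\tau\bn$ and $\bm\tau$ are concise. You instead package this step as a standalone lemma (tokens $\leftrightarrow$ $\bmfL$-classes) and prove the forward inclusion $A_\tau\subseteq\langle ST\rangle$ via the content--distance identity $\dd(X,Y)=\tfrac12|\widehat X\bigtriangleup\widehat Y|$ derived from Theorem~\ref{V minus S = CV minus CS} and Lemma~\ref{so1 shortest path}; the reverse inclusion via Lemma~\ref{lem g mediatic} is common to both arguments. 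Your construction of $\bb$ as a composite of three bijections (tokens $\to$ classes $\to$ classes $\to$ tokens, the middle map coming from Lemma~\ref{isobmfL <=> isograph}) then makes bijectivity automatic, whereas the paper verifies injectivity and surjectivity of $\bb$ by hand. What your route buys is a cleaner, reusable statement and no separate bijectivity check; what the paper's route buys is that it stays entirely within the machinery already developed (Theorem~\ref{theorem theta}), without needing to spell out the content--distance formula.
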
 
\tl
{\sc Proof.} (i) Suppose that $(\aa,\bb)$ is an isomorphism of $\MMM$ onto~$\MMM'$. For any two distinct $S$, $T$ in $\SSS$, we have successively
\begin{align*}
\{S,&T\}\in E&&\\
&\EQ (\exists \tau\in\TTT)(S\tau = T)\quad&&(\text{$G$ is the graph of $\MMM$})\\
&\EQ (\exists \tau\in\TTT)(\aa(S)\bb(\tau) = \aa(T))&&(\text{$\MMM$ and $\MMM'$ are isomorphic})\\
&\EQ \{\aa(S),\aa(T)\}\in E'&&(\text{$G'$ is the graph of $\MMM'$}),
\end{align*}
and so
$$
\{S,T\}\in E\,\EQ\, \{\aa(S),\aa(T)\}\in E'\quad\qquad(S,T\in\SSS,\,S\neq T).
$$
We conclude that $\aa:\SSS\to\SSS'$ is an isomorphism of $G$ onto $G'$.
\tl

(ii) Let $\varphi: \SSS\to\SSS'$ be an isomorphism of $G$ onto $G'$. Define a function 
$\bb:\TTT\to\TTT'$ by  
\begin{equation}\label{def bb iso G => iso media}
\hspace*{-.4cm}\bb(\tau) = \tau'\quad\EQ\quad (\forall S,T\in\SSS)(S\tau = T\eq \varphi(S)\tau'=\varphi(T)).
\end{equation} 
We first verify that the r.h.s.~of the equivalence  (\ref{def bb iso G => iso media}) correctly defines $\bb$ as a bijection of $\TTT$ onto $\TTT'$. For any $\tau\in\TTT$, there exists distinct states $S$ and $T$ in $\SSS$ such that $S\tau = T$ and $\{S,T\}\in E$. Fix $S$ and $T$ temporarily. By the isomorphism $\varphi:\SSS\to\SSS'$ of $G$ onto $G'$, we have $\{\varphi(S),\varphi(T)\}\in E'$, and because $G'$ is the graph of $\MMM'$, we necessarily have $\varphi(S)\tau'=\varphi(T)$ for some $\tau'\in\TTT'$, which is unique by Lemma \ref{prep lem}(i). The hypothesis that $\varphi$ is an isomorphism of $G$ onto $G'$ ensures that the r.h.s.~of (\ref{def bb iso G => iso media}) is indeed an equivalence.  

Next, we show that $\bb(\tau)$ does not depend upon the choice of $S$ and $T$. 
Let $P,Q$ be another pair of distinct states in $\SSS$ such that $P\tau= Q$, and let $P=S\bm$ and $Q=T\bn$ for some concise messages $\bm=\tau_1\ldots\tau_m$ and $\bn=\mu_1\cdots\mu_n$. By Condition [M4], $\tau\bn$ and $\bm\tau$ are concise messages, and so Theorem~\ref{theorem theta} applies. Invoking its implication (ii) $\Rightarrow$ (iii), we get $\ell(\bm)=\ell(\bn)$ and $\CCC(\bm) = \CCC(\bn)$, yielding $m=n$. Denote by $\bmfL$ and $\bmfL'$ the like relations of $G$ and $G'$ respectively. We have thus shown that $ST\bmfL PQ$. By Lemma \ref{isobmfL <=> isograph}, we also have 
$$
\varphi(S)\varphi(T)\bmfL' \varphi(P)\varphi(Q).
$$
Since we have $\varphi(S)\tau' = \varphi(T)$, we can apply Lemma \ref{lem g mediatic} and derive 
$\varphi(P)\tau' = \varphi(Q)$.
\vtl

We still have  to prove that $\bb$ is indeed a bijection. For any $\tau'\in\TTT'$ there are some $S',T'\in\TTT'$ such that $S'\tau' = T'$. We have thus $\{S',T'\}\in E'$, and since $\varphi$ is an isomorphism of $G$ onto $G'$, also $\{\varphi^{-1}(S'),\varphi^{-1}(T')\}\in E$, with $\varphi^{-1}(S')\tau = \varphi^{-1}(T')$ for some $\tau \in\TTT$. Thus $\bb$ maps $\TTT$ onto $\TTT'$. Suppose now that $\bb(\tau) = \bb(\mu) = \tau'\in\TTT'$. This implies that for some $S,T,P,Q\in \SSS$ and $N,M\in\SSS'$,
we must have
\begin{gather}\label{bb is 1-1}
S\tau = T, \quad P\mu = Q,\text{ and }N\tau'=M,
\end{gather}
together with
$\varphi(S) = \varphi(P) = N$ and $\varphi(T) = \varphi(Q) = M$
by the definition of $\bb$. As $\varphi$ is a 1-1 function, we obtain $S=P$ and $T=Q$ in 
(\ref{bb is 1-1}).   Using Lemma \ref{prep lem}(ii), we get $\tau = \mu$. Thus, $\bb$ is a 1-1 function and so a bijection.

The fact that $(\varphi,\bb)$ is an isomorphism of $\MMM$ onto $\MMM'$ follows from the definition of $\bb$ by (\ref{def bb iso G => iso media}). We have
$$
S\tau = T\EQ \varphi(S)\bb(\tau)=\varphi(T)\quad\qquad(S,T\in\SSS)
$$
whether or not $\{S,T\}\in E$.\EOP
\wl

Having defined the graph of a medium and shown that such a graph was necessarily mediatic, we now go in the opposite direction and construct a medium from an arbitrary mediatic graph.

\section*{From Mediatic Graphs to Media}

\begin{definition}\label{graph -> token} {\rm Let $G=(\SSS,E)$ be a mediatic graph and let $\bmfL$  be its like  relation. For any $ST\in\vec{E}$, define a transformation 
$
\tau_{_{ST}}: \SSS\to\SSS\,:\,P\mapsto P\tau_{_{ST}}
$
by the formula
\begin{equation}\label{def tau}
P\tau_{_{ST}} = \begin{cases} Q\text{ if } ST\bmfL PQ,\\
P\text{ otherwise}.
\end{cases}
\end{equation}
We denote by $\TTT=\{\tau_{_{ST}}\st ST\in\vec{E}\}$ the set containing all those transformations. It is clear that the  pair $(\SSS,\TTT)$ is a token system. Such a token system is said to be {\sl induced} by the mediatic graph~$G$. The theorem below establishes that a token system $\KKK$ induced by a mediatic graph $G$ is in fact a medium. We say that $\KKK$ is the  {\sl  medium of the graph}~$G$.  Notice that, since $\bmfL$ is an equivalence relation on $\vec{E}$, we have $\tau_{_{ST}} = \tau_{_{PQ}}$ whenever $ST\bmfL PQ$. In such a case, we have in fact $\langle ST\rangle = \langle PQ\rangle$. The choice of a particular pair $ST\in\langle PQ\rangle$ to denote a token $\tau_{_{ST}}$ is thus arbitrary.  Notice that, as a consequence of this definition,  whenever $\{S,T\}\in E$, then also $ST\bmfL ST$, and so $S\tau_{_{ST}} = T$.
}
\end{definition}
This construction is motivated by the following theorem.

\begin{theorem}\label{pair induced by G}{\sl  The token system $(\SSS,\TTT)$ induced by a mediatic graph\ $G = (\SSS,E)$ is a medium. In particular,  the tokens $\tau_{_{ST}}$ and $\tau_{_{TS}}$ defined by {\rm (\ref{def tau})} are mutual reverses for any $\{S,T\}\in E$.}
\end{theorem}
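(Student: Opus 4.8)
The plan is to verify the two medium axioms [Ma] and [Mb] for the induced token system $(\SSS,\TTT)$, establishing the reverse statement along the way. First I would record the (immediate) fact that in $(\SSS,\TTT)$ two distinct states $P,Q$ are adjacent if and only if $\{P,Q\}\in E$: if $\{P,Q\}\in E$ then $P\tau_{_{PQ}}=Q$ by the closing remark of Definition~\ref{graph -> token}; and if $P\tau_{_{ST}}=Q$ with $P\neq Q$ then (\ref{def tau}) forces $ST\bmfL PQ$, and $\bmfL$ relates only arcs, so $\{P,Q\}\in E$. The reverse claim then drops out of (\ref{HN = PQ <=> NH = QP}): for adjacent $P,Q$,
$$
P\tau_{_{ST}}=Q\ \Leftrightarrow\ ST\bmfL PQ\ \Leftrightarrow\ TS\bmfL QP\ \Leftrightarrow\ Q\tau_{_{TS}}=P,
$$
so $\tau_{_{ST}}$ and $\tau_{_{TS}}$ are mutual reverses; note also that $\tau_{_{ST}}\neq\tau_{_{TS}}$, since $ST\bmfL TS$ would force the absurdity $\dd(S,T)+1=\dd(S,S)=0$. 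In particular every token has a reverse, so the notion ``vacuous'' makes sense for messages over $\TTT$.

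Axiom [Ma] will follow from [G1]. Given distinct states $S,V$, connectedness yields a shortest path $S=S_0,S_1,\dots,S_m=V$ in $G$, and the message $\bm=\tau_{_{S_0S_1}}\tau_{_{S_1S_2}}\cdots\tau_{_{S_{m-1}S_m}}$ is stepwise effective for $S$ and produces $V$. To see $\bm$ is concise I would assume that two of its letters coincide, $\tau_{_{S_{i-1}S_i}}=\tau_{_{S_{j-1}S_j}}$ with $i\neq j$, or that one is the reverse of another, $\tau_{_{S_{i-1}S_i}}=\tau_{_{S_jS_{j-1}}}$; unwinding the defining equalities of $\bmfL$ and using that every sub-path of a shortest path is itself a shortest path (so all distances involved equal $|a-b|$ for the relevant indices), each case collapses to a numerical impossibility such as $1=-1$ or $\ell+2=\ell$. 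Hence $\bm$ has no repeated letter and no letter–reverse pair, so it is concise and [Ma] holds.

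The substance of the proof is [Mb], and the key lemma I would prove is that every $\bmfL$-class is a cut of $G$. Fix $\{S,T\}\in E$ and put
$$
W_{ST}=\{P\in\SSS\st\dd(P,S)<\dd(P,T)\},\qquad W_{TS}=\{P\in\SSS\st\dd(P,T)<\dd(P,S)\}.
$$
By [G2] the endpoints $S,T$ of an edge lie in different parts of the bipartition, so for every $P$ the numbers $\dd(P,S)$ and $\dd(P,T)$ have opposite parities and, being within $1$ of each other by the triangle inequality, differ by exactly $1$; hence $\SSS=W_{ST}+W_{TS}$. The claim is that $\langle ST\rangle$ consists exactly of the arcs $PQ\in\vec E$ with $P\in W_{ST}$ and $Q\in W_{TS}$. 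One inclusion is just unpacking the definition of $\bmfL$. For the other, take an edge $\{P,Q\}$ with $P\in W_{ST}$, $Q\in W_{TS}$: three of the four equalities needed for $ST\bmfL PQ$ are immediate from $\dd(P,T)=\dd(P,S)+1$ and $\dd(Q,S)=\dd(Q,T)+1$, while the fourth, $\dd(S,Q)=\dd(S,P)+1$, is the one delicate point. Since $\{P,Q\}\in E$ the distances $\dd(S,Q)$ and $\dd(S,P)$ differ by exactly $1$, and the option $\dd(S,Q)=\dd(S,P)-1$ is impossible: it would give $\dd(T,Q)=\dd(Q,S)-1=\dd(S,P)-2$, while the triangle inequality together with $P\in W_{ST}$ force $\dd(T,Q)\ge\dd(T,P)-1=\dd(S,P)$. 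I expect this ``wrong-direction'' elimination to be the only genuine obstacle in the whole proof; only [G2] is used here, everything else being bookkeeping.

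To finish [Mb], let $\bm=\tau_1\cdots\tau_k$ be a return for a state $S$, with visited states $S=S_0,S_1,\dots,S_k=S$. Stepwise effectiveness forces $\tau_i=\tau_{_{S_{i-1}S_i}}$ and $\{S_{i-1},S_i\}\in E$ for each $i$, so $(S_0,\dots,S_k)$ is a closed walk in $G$. Fix an index $j$, consider the cut $(W_{S_{j-1}S_j},W_{S_jS_{j-1}})$, and set $g(i)=0$ if $S_i\in W_{S_{j-1}S_j}$ and $g(i)=1$ otherwise. By the cut lemma, $g(i)-g(i-1)$ equals $+1$ exactly when $\tau_i=\tau_{_{S_{j-1}S_j}}$, equals $-1$ exactly when $\tau_i=\tau_{_{S_jS_{j-1}}}$, and is $0$ in every other step; since $g(k)=g(0)$, telescoping shows that $\tau_{_{S_{j-1}S_j}}$ and its reverse occur equally often in $\bm$. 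Letting $j$ range over $\{1,\dots,k\}$ shows that in $\bm$ every letter occurs exactly as often as its reverse, and since no token equals its reverse we can partition $\{1,\dots,k\}$ into pairs of indices carrying mutually reverse tokens. Thus $\bm$ is vacuous, [Mb] holds, and $(\SSS,\TTT)$ is a medium. I would close by remarking that Axiom [G3] is not used again in these verifications beyond its role in Definition~\ref{graph -> token} and Lemma~\ref{trect equiv rel}, namely ensuring that each $\tau_{_{ST}}$ is a well-defined function and that $\bmfL$ is an equivalence relation.
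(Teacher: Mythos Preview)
Your argument is correct. The verification of [Ma] is essentially the paper's, but your treatment of [Mb] is genuinely different. The paper proves [Mb] by induction on the length of the return: if the associated closed walk is an isometric subgraph of $G$ (a minimal circuit), opposite arcs are shown directly to be $\bmfL$-related and hence to carry mutually reverse tokens; otherwise a geodesic chord splits the walk into two strictly shorter closed walks, and the induction hypothesis is applied to each piece, the token pairs then being reassembled. You instead prove a ``semicube'' lemma---that $\langle ST\rangle$ is exactly the set of arcs crossing the partition $(W_{ST},W_{TS})$---and finish with a telescoping count along the closed walk; no induction is needed, and the case analysis disappears. Your route is the Djokovi\'c-style argument the paper alludes to when it mentions the relations $\theta$ and $\Theta$, and it is arguably cleaner: once the cut description of $\langle ST\rangle$ is in hand (which, as you note, uses only [G1] and [G2]), vacuity of any return is immediate. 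The paper's inductive proof, on the other hand, stays entirely inside the $\bmfL$ framework and avoids introducing the half-spaces $W_{ST}$, at the cost of the somewhat delicate splitting step (cf.\ Remark~\ref{not simple}). One small point of presentation: in your cut lemma you speak of ``three of the four equalities'' being immediate, but there are only three equalities among the four distances; what you mean is that two of them come straight from $P\in W_{ST}$ and $Q\in W_{TS}$, while the remaining one, $\dd(S,Q)=\dd(S,P)+1$, requires the parity-plus-triangle argument you give.
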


{\sc Proof.} We verify that  $(\SSS,\TTT)$ satisfies Axioms [Ma] and [Mb] of a medium.

[Ma] For any  $S,T\in\SSS$, there is a shortest path $S_0=S, S_1,\dots,S_n =~T$ between $S$ and $T$ in $G$. This implies that, for $0\leq i\leq n-1$, we have $\{S_i,S_{i+1}\}\in E$, which yields  $S_i\tau_{_{S_iS_{i+1}}}=S_{i+1}$. It follows that the message $\bm=\tau_{_{S_0S_1}}\ldots\tau_{_{S_{n-1}S_n}}$ produces $T$ from~$S$ and is stepwise effective. To prove that $\bm$ is concise, we must still show that it is consistent and without repetitions. 
The message $\bm$ is consistent since otherwise we would have 
\begin{equation}
\label{proof M2}
S_h\tau_{_{MN}} = S_{h+1}\quad\text{and}\quad S_k\tau_{_{NM}} = S_{k+1}
\end{equation}
for some indices $h$ and $k$, with $h<k$,   and some $NM\in\vec E$. Since $\tau_{_{MN}}$ is the reverse of $\tau_{_{NM}}$, the last equality in (\ref{proof M2}) can be rewritten as $ S_{k+1}\tau_{_{MN}} = S_{k}$. Thus, by definition of the tokens in (\ref{def tau}), the above statement (\ref{proof M2}) leads to
$S_hS_{h+1}\bmfL MN\bmfL S_{k+1}S_k$ which, by transitivity, gives
$
S_kS_{k+1}\bmfL S_{h+1}S_h\,.
$ Because $h<k$, we derive by the definition of the like relation $\bmfL$ 
$$ 
k+1 - h = \dd(S_{k+1},S_h) = \dd(S_k, S_{h+1}) = k -1 -h
$$ 
yielding the absurdity $1 = -1$. Thus, $\bm$ is consistent. Suppose that $\bm$ has repeated tokens, say $S_i\tau_{_{S_iS_{i+1}}} = S_{i+1}$ and $ S_{i+k}\tau_{_{S_iS_{i+1}}} = S_{i+k+1}$
for some indices $0\leq i < n$ and $0\leq i+k< n$. This would give
 $S_iS_{i+1}\bmfL S_{i+k}S_{i+k+1}$, leading to 
$$
d(S_i,S_{i+k+1}) = k+1 > k-1 = d(S_{i+1},S_{i+k}),
$$
while by the definition of $\bmfL$ we should have
$d(S_i,S_{i+k+1}) = d(S_{i+1},S_{i+k})$, a contradiction. Thus, the message $\bm$ is concise.
\vspace{.2cm}

[Mb] Let $\bm=\tau_{_{S_0S_1}}\tau_{_{S_1S_2}}\ldots\tau_{_{S_{n-1}S_n}}$ be a return message for some state~$S$; we have thus $S_0=S_n = S$. In the terminology of $G$, we have a closed walk $S=S_0,S_1,\ldots,S_n=S$.  We denote this closed walk by $\BW$ and we write $\vec{E}_\BW$ for the set of all its arcs $S_iS_{i+1}$, $0\leq i \leq n-1$.
By [G2] and K\"{o}nig's Theorem, such a closed walk is even; so $n = 2q$ for some $q\in\Na$. We prove by induction on $q$ that $\bm$ is vacuous. The case $q=1$ (the smallest possible return) is trivial, so we suppose that [Mb] holds for any $1\leq p < q$ and prove that [Mb] also holds for $q= p$. We consider two cases.
	
Case 1: $\BW$ is an isometric subgraph of $G$.  Thus, $\BW$ is a minimal circuit of $G$. Take any token 
$\tau_{_{S_{i}S_{i+1}}}$ in $\bm$. Since (with the addition modulo $k$ in the indices), we have for
$0\leq i < n$
\begin{align*}
\dd(S_{i+1},S_{i+k}) &= \dd(S_{i},	S_{i+k+1}) = k-1,\\ \dd(S_{i},S_{i+k}) &= \dd(S_{i+1},S_{i+k+1}) = k,
\end{align*}
we obtain $S_iS_{i+1}\bmfL S_{i+k+1}S_{i+k}$.  By the definition of the tokens in (\ref{def tau}) and the transitivity and symmetry of $\bmfL$, we get  for any $P,Q\in~\SSS$
\begin{align*}
P\tau_{_{S_iS_{i+1}}} = Q&\EQ S_iS_{i+1}\bmfL PQ\\
&\EQ S_{i+k+1}S_{i+k}\bmfL PQ \\
&\EQ P\tau_{_{S_{i+k+1}S_{i+k}}} = Q\\
&\EQ Q\tau_{_{S_{i+k}S_{i+k+1}}} = P.
\end{align*}
We conclude that $\tau_{_{S_{i+k}S_{i+k+1}}}$ and $\tau_{_{S_iS_{i+1}}}$ are mutual reverses, and so $\bm$ is vacuous. (Note that the induction hypothesis has not been used here.)
\tl

Case 2: $\BW$ is not an isometric subgraph of $G$. Then, there are two vertices $S_i$ and $S_j$ in $\BW$, with $i<j$, 
and a shortest path  $\BL$ from $S_i$ to $S_j$ in $G$ with $\dd_{ij} = \dd(S_i,S_j) < \min\{j-i, i + n-j\}$ (see Figure \ref{AxiomM3}). Thus, $j-i$ and $i+n-j$ are the lengths of the two segments of $\BW$ with endpoints $S_i$ and $S_j$. For simplicity, we can assume without loss of generality that $S_i$ and $S_j$ are the only vertices of $\BL$ that are also in $\BW$. Let $\bp$ the straight message producing $S_j$ from $S_i$ and corresponding to the shortest path $\BL$ in the sense of Lemma \ref{so1 shortest path}. 

{\begin{figure}[h!]
\centerline{\includegraphics*[scale=0.78]{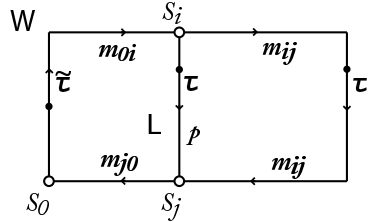}}
\caption[AxiomM3]{\label{AxiomM3} {\sc Case 2 in the proof of Axiom [Mb] in Theorem 
\ref{pair induced by G}: the closed walk $\BW$ is not an isometric subgraph.}}
\end{figure}
}

 We also split $\bm$ into the three messages:
\begin{align*}
\bm_{0i} & = \tau_{_{S_0S_1}}\ldots\tau_{_{S_{i-1}S_i}}\\
\bm_{ij}  & = \tau_{_{S_{i}S_{i+1}}}\ldots\tau_{_{S_{j-1}S_j}}\\
\bm_{j0} & = \tau_{_{S_{j}S_{j+1}}}\ldots\tau_{_{S_{n-1}S_0}}\,.
\end{align*}
We have thus $\bm = \bm_{0i}\bm_{ij}\bm_{j0}$. Note that the two messages $\bm_{0i}\bp\bm_{j0}$ and $\widetilde\bp\bm_{ij}$  have a length strictly smaller that $n= 2q$. By the induction hypothesis, these two messages are vacuous. Accordingly, for any token $\tau$ of $\bp$, there is an reverse token $\tilde\tau$ either in $\bm_{0i}$ or in $\bm_{j0}$. (In Figure \ref{AxiomM3} the token $\tilde\tau$ is pictured as being part of $\bm_{0i}$.) Considered from the viewpoint of the message  $\widetilde\bp\bm_{ij}$ from $S_j$, the token $\tilde\tau$ is in $\widetilde \bp$ with its reverse $\tau$ in $\bm_{ij}$. The two reverses of the tokens in $\bp$ and $\widetilde\bp$, form a pair of mutually reverse tokens $\{\tau,\tilde\tau\}$ in~$\bm$. Such a pair can be obtained for any token $\tau$ in $\bp$. Augmenting the set of all those pairs by the set of mutually reverse tokens in $\bm_{0i}$, $\bm_{ij}$ and $\bm_{j0}$, we obtain a partition of the set $\CCC(\bm)$ into pairs of mutually reverse tokens, which establishes that  the message $\bm$ is vacuous. 
\vspace{.1cm}

We have shown that the token system $(\SSS,\TTT)$ satisfies Axioms [Ma] and [Mb].  The proof is thus complete. \EOP

\begin{remark}\label{not simple}{\rm In the above proof, the inductive argument used to establish Case 2 of [M3] may convey the mistaken impression that the situation is always straightforward. The simple graph pictured in Figure \ref{AxiomM3} is actually glossing over some intricacies. The non--isometric subgraph $\BW$ is pictured by the thick lines in Figure \ref{multi-equiv} and is not `convex.' We can see how the inductive stage splitting the closed walk $\BW$  by the shortest path $\BL$ may lead to form, in each of the two smaller closed walks, pairs
$\{\mu,\tilde\mu\}$ and $\{\nu,\tilde\nu\}$ which correspond in fact to the same pair of tokens in $\BW$. Since the arcs corresponding to $\mu$ and $\nu$ are in the like relation $\bmfL$, the mistaken assignment is temporary.

{\begin{figure}[h!]
\centerline{\includegraphics*[scale=0.27]{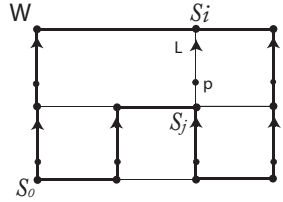}}
\caption[{multi-equiv}]{\label{multi-equiv} {\sc The non--isometric subgraph $\BW$ of Case 2 in the proof of [M3] in Theorem \ref{pair induced by G} is pictured in thick lines. The inductive stage of the proof
 leads to form temporarily, in each of the two smaller closed walks delimited by the shortest path $\BL$, pairs
$\{\mu,\tilde\mu\}$ and $\{\nu,\tilde\nu\}$ corresponding to the same pair of mutually reverse tokens in $\BW$. 
}}
\end{figure}
}
}
\end{remark}

We finally obtain:

\begin{theorem}\label{1-1 cor M-G}{\sl Let $\SSS$ an arbitrary set, with $|\SSS|\geq 2$. Denote by $\mfM$ the set of all media on $\SSS$, and by $\mfG$ the set of all mediatic graphs on~$\SSS$. There exists a bijection $\mff:\mfM\to\mfG\,:\,\MMM\mapsto\mff(\MMM)$ such that $G=\mff(\MMM)$ is the graph of $\MMM$ in the sense of Definition {\rm \ref{so1 media graph}} if and only if $\MMM$ is the medium of the mediatic graph $G$ in the sense of Definition {\rm\ref{graph -> token}}.}
\end{theorem}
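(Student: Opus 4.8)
The plan is to define $\mff$ in the evident way, exhibit an explicit candidate for its inverse, verify the two composition identities, and then read off the biconditional. Let $\mff(\MMM)$ be the graph of the medium $\MMM$ in the sense of Definition~\ref{so1 media graph}; it has vertex set $\SSS$, and by Theorem~\ref{media graph => mediatic} it is mediatic, so $\mff$ indeed maps $\mfM$ into $\mfG$. Let $\Gamma(G)$ be the token system induced by the mediatic graph $G=(\SSS,E)$ in the sense of Definition~\ref{graph -> token}; by Theorem~\ref{pair induced by G} this is a medium on $\SSS$, so $\Gamma$ maps $\mfG$ into $\mfM$. I will show that $\mff\circ\Gamma=\mathrm{id}_{\mfG}$ and $\Gamma\circ\mff=\mathrm{id}_{\mfM}$. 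Granting this, $\mff$ is a bijection with $\mff^{-1}=\Gamma$; and since, by construction, $\mff(\MMM)$ \emph{is} the graph of $\MMM$ while $\Gamma(G)=\mff^{-1}(G)$ \emph{is} the medium of $G$, the statement ``$G=\mff(\MMM)$ is the graph of $\MMM$'' is literally the statement ``$\MMM=\Gamma(G)$ is the medium of $G$'', which is the asserted equivalence.

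For $\mff\circ\Gamma=\mathrm{id}_{\mfG}$ I would just unwind the definitions. Fix a mediatic graph $G=(\SSS,E)$, let $(\SSS,\TTT)=\Gamma(G)$ be its induced medium, and let $(\SSS,E')=\mff(\Gamma(G))$ be the graph of that medium. If $\{S,T\}\in E$, the closing remark of Definition~\ref{graph -> token} gives $S\tau_{_{ST}}=T$, so $\{S,T\}\in E'$. If $\{S,T\}\in E'$ then $S\ne T$ and $S\tau_{_{PQ}}=T$ for some arc $PQ\in\vec E$; since $T\ne S$, the defining formula~(\ref{def tau}) is not in its ``otherwise'' branch, whence $PQ\,\bmfL\,ST$, and as $\bmfL$ is a relation on $\vec E$ this forces $\{S,T\}\in E$. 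Hence $E=E'$ and $\mff(\Gamma(G))=G$.

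The identity $\Gamma\circ\mff=\mathrm{id}_{\mfM}$ is the substantive part. Fix a medium $\MMM=(\SSS,\TTT)$, let $G=(\SSS,E)$ be its graph, and let $\KKK=(\SSS,\TTT^{*})=\Gamma(\mff(\MMM))$ be the medium induced by $G$; since both media have state set $\SSS$, it suffices to prove $\TTT^{*}=\TTT$ as sets of transformations of $\SSS$. I would first isolate, as a lemma, the following description of the like relation of the graph of a medium: for arcs $ST,PQ\in\vec E$,
\[ ST\,\bmfL\,PQ \quad\Longleftrightarrow\quad (\exists\,\tau\in\TTT)(S\tau=T\text{ and }P\tau=Q). \]
The ``only if'' direction is Lemma~\ref{lem g mediatic}; the ``if'' direction is exactly the computation already carried out inside the proof of Theorem~\ref{iso media => iso graphs}(ii) (write $P=S\bm$ and $Q=T\bn$ with $\bm,\bn$ concise, use [M4] together with the implication (ii)$\Rightarrow$(iii) of Theorem~\ref{theorem theta} to force $\ell(\bm)=\ell(\bn)$, and read off the distance equalities that define $\bmfL$). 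I would also record the elementary remark that in any medium a token moves each state to itself or to an adjacent state: if $P\tau\ne P$ then the one-symbol message $\tau$ is concise for $P$, so by Lemma~\ref{so1 shortest path} it traces a shortest path of length~$1$, i.e.\ $\{P,P\tau\}\in E$. With these facts in hand the verification of $\TTT^{*}=\TTT$ is mechanical. Given $\tau\in\TTT$, choose a state $S$ with $S\tau=T\ne S$, so $\{S,T\}\in E$, and check $\tau=\tau_{_{ST}}$ pointwise: if $P\tau=Q\ne P$ then $\{P,Q\}\in E$, so $ST\,\bmfL\,PQ$ (with $\tau$ itself as the common token) and $P\tau_{_{ST}}=Q$ by~(\ref{def tau}); if $P\tau=P$ then no $Q$ can satisfy $ST\,\bmfL\,PQ$, since by the lemma such a $Q$ would yield a token $\rho$ with $S\rho=T$ and $P\rho=Q\ne P$, contradicting the uniqueness of the token producing $T$ from $S$ (Lemma~\ref{prep lem}(iii)), so $P\tau_{_{ST}}=P$ by~(\ref{def tau}). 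Thus $\TTT\subseteq\TTT^{*}$. Conversely every member of $\TTT^{*}$ is of the form $\tau_{_{ST}}$ with $\{S,T\}\in E$, i.e.\ with $S\tau=T$ for the corresponding (unique) $\tau\in\TTT$, and the same pointwise check gives $\tau_{_{ST}}=\tau\in\TTT$. Hence $\TTT^{*}=\TTT$ and $\KKK=\MMM$.

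I expect the one genuine obstacle to be the ``if'' half of the like-relation description for the graph of a medium: that is where Theorem~\ref{theorem theta} (via [M4] and the conciseness bookkeeping) does the real work, and it is the single place in the argument that is not a routine unfolding of Definitions~\ref{so1 media graph} and~\ref{graph -> token}. The remaining ingredients---the well-definedness of $\mff$ and $\Gamma$ by Theorems~\ref{media graph => mediatic} and~\ref{pair induced by G}, the two set-equalities $E=E'$ and $\TTT^{*}=\TTT$, and the formal deduction of the biconditional---should all be straightforward.
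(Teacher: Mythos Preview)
Your proposal is correct and follows essentially the same strategy as the paper: both define the two maps (medium $\mapsto$ its graph, mediatic graph $\mapsto$ its induced medium) and show they are mutual inverses, with the crux being the correspondence between tokens of $\MMM$ and like-equivalence classes of its graph, established via Lemma~\ref{lem g mediatic} in one direction and the Theorem~\ref{theorem theta} computation from Theorem~\ref{iso media => iso graphs}(ii) in the other. The paper packages this more tersely by reparametrizing mediatic graphs by their like-partitions and checking that the induced maps $f:\tau\mapsto\langle PQ\rangle$ and $g:\langle ST\rangle\mapsto\tau_{_{ST}}$ between tokens and like-classes satisfy $g\circ f=\mathrm{id}$ and $f\circ g=\mathrm{id}$, whereas you verify the composites $\mff\circ\Gamma$ and $\Gamma\circ\mff$ directly at the level of edge sets and token sets; your version is more explicit (in particular you isolate the $\bmfL$ characterization as a stated lemma and invoke Lemma~\ref{prep lem}(iii) for uniqueness), but the underlying content is the same.
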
 

\begin{proof} Because the set~$\SSS$ of states is constant in $\mfM$ and confounded with the constant set of vertices in $\mfG$, we could reinterpret the function $\mff$ as a mapping of the family $\mfT$ of all sets of token $\TTT$ making $(\SSS,\TTT)$ a medium, into the family $\mfE$ of all sets of edges $E$ making $(\SSS,E)$ a mediatic graph. However, any set of edges $E$ of a mediatic graph on $\SSS$ is characterized by its like relation $\bmfL$, or equivalently, by the partition  of $\vec E$ induced by $\bmfL$. We choose the latter characterization for the purpose of this proof, and denote by $\vec \mfE_{| lr}$ the set of all the partitions of the sets of arcs $\vec E$ induced by the like relations characterizing the sets of edges in the collection~$\mfE$. 
\tl
 From Lemmas  \ref{media graph => mediatic} and \ref{pair induced by G}, we know that the graph of a medium is mediatic, and that the token system induced by a mediatic graph is a medium.  We have to show that the functions
 $$ \mff: \mfT\to \vec \mfE_{| lr}\quad\text{and}\quad\mfg: \vec \mfE_{| lr}\to\mfT
 $$ implicitly defined by (\ref{def graph rep}) and   (\ref{def tau}), respectively, are mutual inverses.
Note that, for any $\TTT\in\mfT$, the partition $\mff(\TTT)$ is defined via a function $f$ mapping $\TTT$ into the partition $\mff(\TTT)$. Writing as before $\langle ST\rangle$ for the equivalence class containing the arc $ST$, we have 
\begin{equation}\label{def mff f}
P\tau = Q\quad\EQ\quad f(\tau)= \langle PQ\rangle\quad\qquad(\tau\in\TTT;\, P,Q\in\SSS).
\end{equation} 
Proceeding similarly, but inversely, for the function $\mfg$, we notice that it defines, for each $\vec E_{| lr}$ in $\vec \mfE_{| lr}$ the set of tokens $\mfg(\vec E_{| lr})$ via a function $g$ mapping $\vec E_{| lr}$ into the set of tokens $\mfg(\vec E_{| lr})$; we obtain
\begin{equation}\label{def mfg g}
\langle ST\rangle = \langle PQ\rangle\quad\EQ\quad Pg(\langle ST\rangle) =Q\qquad (S,T,P,Q\in\SSS).
\end{equation}
Combining (\ref{def mff f}) and (\ref{def mfg g}) we obtain
$$
P\tau = Q\EQ f(\tau)= \langle PQ\rangle\EQ P(g\circ f)(\tau) =Q\quad (\tau\in\TTT;\, P,Q\in\SSS).
$$
We have thus $g = f^{-1}$ and so $\mfg = \mff^{-1}$. Conversely, we have 
\begin{align*}
\langle ST\rangle = \langle PQ\rangle\EQ  Pg(\langle ST\rangle) =Q\EQ (f\circ g)(\langle ST\rangle) 
&= \langle PQ\rangle\\
&(S,T,P,Q\in\SSS),
\end{align*}
yielding $f = g^{-1}$ and so $\mff = \mfg^{-1}$.
\end{proof}
\newtheorem{twoexamples}[theorem]{Two Examples}
\begin{twoexamples} {\rm In the last paragraph of our introductory section, we announced that the collection $\mfI$ of all the interval orders on a finite set $X$ was representable as a mediatic graph. The argument goes as follows. Doignon and Falmagne (1997) proved that such a collection $\mfI$ is always `well-graded', that is, for any two interval orders $K$ and $L$, there exists a sequence $K_0 = K, K_1,\ldots, K_n = L$ of interval orders on $X$ such that $|K_i\bigtriangleup K_{i+1}| = 1$ for $0 \leq i \leq n-1$ and $|K\bigtriangleup L| = n$. It is easily shown  \citep[see][]{falma97a} that any well-graded family $\FFF$ can be cast as a medium $\MMM(\FFF)$: the states of the medium are the sets of the family, and the tokens consist in either adding or removing an element from a set in $\FFF$. By Theorem \ref{media graph => mediatic}, the graph of the medium $\MMM(\III)$ is mediatic. A similar argument applies to the family of all the semiorders on~$X$, and to some other families on~$X$ (for example, partial orders and biorders, cf.~Doignon and Falmagne, 1997).
}
\end{twoexamples} 

\subsubsection*{References}
\begin{itemize}
\bibitem[Bondy(1995)]{bondy95}
\hspace*{-.7cm}J.A. Bondy.
\newblock {Basic graph theory: paths and circuits}.
\newblock In R.L. Graham, M.~Gr{\"o}tschel, and L.~Lov{\'a}sz, editors,
  \emph{Handbook of Combinatorics}, volume~1. The M.I.T. Press, Cambridge, MA,
  1995.

\bibitem[Djokovi{\'c}(1973)]{djoko73}
\hspace*{-.7cm}D.Z. Djokovi{\'c}.
\newblock {Distance preserving subgraphs of hypercubes}.
\newblock \emph{Journal of Combinatorial Theory, Ser. B}, 14:\penalty0
  263--267, 1973.
  
\bibitem[Doignon and Falmagne(1997)]{doign97}
\hspace*{-.7cm}J.-P. Doignon, and J.-Cl.~Falmagne.
\newblock {Well-graded families of relations}.
\newblock \emph{Discrete Mathematics}, 173:\penalty0 35--44, 1997.
  
  \bibitem[Eppstein(2002)]{eppst02}
\hspace*{-.7cm}D.~Eppstein and Falmagne, J.-Cl.
\newblock {Algorithms for media}.
\newblock \emph{Electronic preprint}, arXiv.org, cs.DS/0206033),
  2002.

\bibitem[Eppstein(2005)]{eppst05}
\hspace*{-.7cm}D.~Eppstein.
\newblock {The lattice dimension of a graph}.
\newblock \emph{European Journal of Combinatorics}, 26(6):\penalty0 585--592,
  2005.

\bibitem[Falmagne(1997)]{falma97a}
\hspace*{-.7cm}J.-Cl.~Falmagne.
\newblock {Stochastic token theory}.
\newblock \emph{Journal of Mathematical Psychology}, 41\penalty0 (2):\penalty0
  129--143, 1997.

\bibitem[Falmagne and Ovchinnikov(2002)]{falma02}
\hspace*{-.7cm}J.-Cl.~Falmagne and S.~Ovchinnikov.
\newblock {Media theory}.
\newblock \emph{Discrete Applied Mathematics}, 121:\penalty0 83--101, 2002.

\bibitem[Fishburn(1971)]{fishb71}
\hspace*{-.7cm}P.C. Fishburn.
\newblock {Betweenness, orders and interval graphs}.
\newblock \emph{Journal of Pure and Applied Algebra}, 1\penalty0 (2):\penalty0
  159--178, 1971.

\bibitem[Fishburn(1985)]{fishb85}
\hspace*{-.7cm}P.C. Fishburn.
\newblock \emph{{Interval orders and interval graphs}}.
\newblock John Wiley {\&} Sons, London and New York, 1985.

\bibitem[Fishburn and Trotter(1999)]{fishb99}
\hspace*{-.7cm}P.C. Fishburn and W.T. Trotter.
\newblock {Split semiorders}.
\newblock \emph{Discrete Mathematics}, 195:\penalty0 111--126, 1999.

\bibitem[Graham and Pollak(1971)]{graha71}
\hspace*{-.7cm}R.L. Graham and H.~Pollak.
\newblock {On addressing problem for loop switching}.
\newblock \emph{Bell Systems Technical Journal}, 50:\penalty0 2495--2519, 1971.

\bibitem[K{\"o}nig(1916)]{konig16}
\hspace*{-.7cm}D.~K{\"o}nig.
\newblock {{\"U}ber Graphen und ihren Anwendung auf Determinanten-theorie und
  Mengenlehre}.
\newblock \emph{Mathematische Annalen}, 77:\penalty0 453--465, 1916.

\bibitem[Ovchinnikov(2006{\natexlab{a}})]{ovchi06a}
\hspace*{-.7cm}S.~Ovchinnikov.
\newblock {Media theory: representations and examples}.
\newblock \emph{Discrete Applied Mathematics}, 2006.
\newblock Accepted for publication.

\bibitem[Ovchinnikov and Dukhovny(2000)]{ovchi00}
\hspace*{-.7cm}S.~Ovchinnikov and A.~Dukhovny.
\newblock {Advances in media theory}.
\newblock \emph{International Journal of Uncertainty, Fuzziness and
  Knowledge-Based Systems}, 8\penalty0 (1):\penalty0 45--71, 2000.
  
  \bibitem[Winkler(1984)]{winkl84}
\hspace*{-.7cm}P.M. Winkler.
\newblock {Isometric embedding in products of complete graphs}.
\newblock \emph{Discrete Applied Mathematics}, 7:\penalty0 221--225, 1984.
\end{itemize}
%\bibliography{grand}
%\bibliographystyle{plainnat}
\end{document}